\title{Weinstein manifolds revisited} 
 \author{Yakov Eliashberg\thanks{Partially supported by NSF grant  DMS-1505910} }
\date{}  
\let\oldmarginpar\marginpar
\renewcommand\marginpar[1]{\-\oldmarginpar[\raggedleft\footnotesize #1]%
{\raggedright\footnotesize #1}}
\theoremstyle{plain}
\newtheorem{theorem}{Theorem}[section]
\newtheorem{thm}[theorem]{Theorem}
\newtheorem{corollary}[theorem]{Corollary}
\newtheorem{proposition}[theorem]{Proposition}
\newtheorem{prop}[theorem]{Proposition}
\newtheorem{lemma}[theorem]{Lemma}
\newtheorem{problem}[theorem]{Problem} 
\theoremstyle{remark}
\newtheorem{remark}[theorem]{Remark}
\newtheorem*{remark*}{Remark}
 \newtheorem{Example}[theorem]{Example}
\newtheorem*{example*}{Example}
\theoremstyle{definition}
\newcommand{\wt}{\widetilde}
\newcommand{\wh}{\widehat}
\newcommand{\p}{\partial}
\newcommand{\om}{\omega}
\newcommand{\eps}{\varepsilon}
\newcommand{\Core}{\rm Core}
\newcommand{\Z}{{\mathbb{Z}}}
\newcommand{\R}{{\mathbb{R}}}
\newcommand{\C}{{\mathbb{C}}}
\newcommand{\st}{{\rm st}}
\newcommand{\sign}{{\mathrm sign}}
\newcommand{\const}{{\rm const}}
\newcommand{\Int}{{\rm Int\,}} %Interior
\newcommand{\Id}{\mathrm {Id}}
\newcommand{\So}{\mathrm{Soul}}
\newcommand{\FF}{\mathcal{F}}
\newcommand{\fW}{{\mathfrak W}}
\newcommand{\fL}{{\mathfrak L}}
\numberwithin{figure}{section}
\begin{document}

\maketitle
  \leftline{\small   Department of Mathematics, Stanford University}  

%\centerline{\bf Department of Mathematics,  Stanford University ~~~~~~~~~~~~~~~}

\begin{abstract}
This is a very biased and incomplete survey of some basic notions, old and new results, as well as  open problems  concerning Weinstein symplectic manifolds.
\end{abstract}

\section{Weinstein manifolds, domains, cobordisms}
We begin with a notion of a Liouville domain.
Let $(X,\om)$ be a $2n$-dimensional  compact symplectic manifold with boundary equipped  with an exact symplectic form $\om$. A Liouville structure on $(X,\om)$ is a choice of a primitive $\lambda$, $d\lambda=\om$, called {\em Liouville form} such that $\lambda|_{\p   X}$ is a contact form and the orientation of $\p  X$ by the form $\lambda\wedge d\lambda^{n-1}|_{\p X}$ coincides with its orientation as the boundary of symplectic manifold $(X,\om)$. The vector field $Z$, that is $\om$-dual to $\lambda$, i.e. $\iota(Z)\om= \lambda$, is also called  {\em Liouville}. It satisfies the condition $L_Z\om=\om$ which means that its flow is conformally symplectically expanding. The contact boundary condition is equivalent to the  outward transversality of $Z$ to $\p   X$.  A Liouville domain $ X$ can always be  completed to a {\em Liouville manifold} $\wh X$ by attaching a  cylindrical end:
$$\wh X:= X\cup ({\p  X}\times[0,\infty))$$ and extending $\lambda$ to $\wh X$ as equal to $e^s(\lambda|_{\p X})$ on the attached end. We will be constantly going back and forth between these two tightly related notions of Liouville  domains and Liouville manifolds.

Given a Liouville  structure $ \fL=(X,\om,Z)$ we say that  a Liouville  structure
$\fL'=(X',\om, Z)$ is obtained by  a {\em radial deformation} from $\fL$ if there exists a   function $h:\wh X\to\R$ such that $X'\subset \wh X$ is the image of $X$ under the time $1$ map $\psi:\wh X\to\wh X$ of the flow of the vector field $hZ$ on the completion $\wh X$. The completions of the  radially equivalent Liouville domains $\fL'$ and  $\fL$ are canonically isomorphic.

The space of Liouville structures for $ (X, \om)$ is convex, and hence any two Liouville structures are canonically homotopic. Given a homotopy of  completed  Liouville structures $(\wh X, \om_t, \lambda_t)$ there exists an isotopy $\phi_t:\wh X\to \wh X$ such that $\phi_t^*\om_t=\om_0$. Moreover, one can always arrange that $\phi_t^*\lambda_t=\lambda_0+dH_t$, see \cite{CE12}, Sections 11.1 and 11.2.
In particular on completed Liouville manifolds                       it is always sufficient to consider homotopies fixing the symplectic form, and, moreover, changing the Liouville form by adding an exact form. Homotopic non-completed Liouville domains  are symplectomorphic {\em up to   radial deformation}.

Given a Liouville domain $\fL=(X,\om,\lambda)$ consider a compact set 
$$\Core(\fL)=\mathop{\bigcap}\limits_{t>0}Z^{-t}( X),$$ the attractor of the negative  flow of
the Liouville vector field $Z$. We will call  $\Core(\fL)$ the {\em core}, or the {\em skeleton} of the Liouville structure $\fL$. While $\Core(\fL)$  has obviously its $2n$-dimensional Lebesgue  measure equal to $0$, it still can be pretty large if no  extra conditions are imposed on the Liouville structure. For instance, McDuff constructed in  \cite{McD-FE}   a Liouville structure on $T^*S_g\setminus S_g$ for a closed surface $S_g$  of genus $g>1$, whose core has   codimension 1. 

However, the situation changes if one requires existence of a Lyapunov function for the Liouville vector field $Z$.
A {\em Weinstein structure} on a domain $ X$ is a Liouville structure $\fL$ together with a   function $\phi: X\to \R$ which is {\em Lyapunov} for the Liouville vector field $Z$, i.e.
 
 \begin{description}
\item{(L1)} $d\phi(Z)>c||Z||^2$ for a positive constant $c$ and some Riemann metric on $  X$.
\end{description}

\begin{figure}[h]
\begin{center}
\includegraphics[scale=.9]{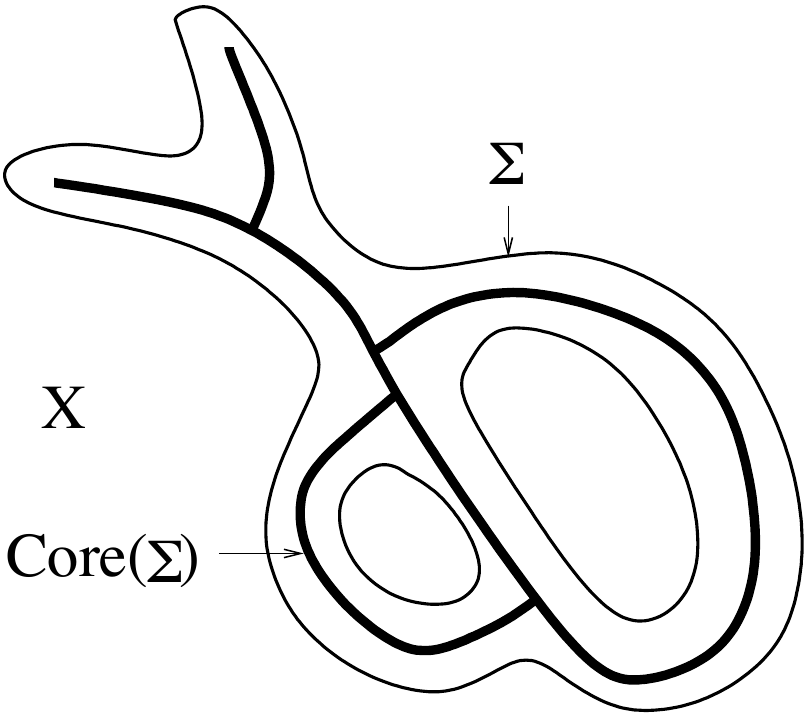}
\caption{Skeleton of a Weinstein domain}\label{fig:skel}
\end{center}  
\end{figure}

 Note that condition (L1) implies that $\Core( X,\lambda)$ is the    union of $Z$-stable  manifolds of critical points of $\phi$ (i.e. points converging to the critical locus in forward time). 
In \cite{CE12}  it was  required   in addition that $\phi$ is either   Morse or generalized Morse (i.e. may have   death-birth critical points).
 Under these assumptions  it was shown in  \cite{CE12}, see also \cite{EliGro91, Eli95},  that
  \begin{itemize}
\item[(L2)] the core is  stratified by isotropic for $\lambda$, and  hence for $\om$ submanifolds.
\end{itemize} F. Laudenbach proved, see \cite{laudenbach},  that if  the   flow of $Z$ 
is Morse-Smale (i.e.  stable and unstable manifolds of critical points intersect transversely) and near critical points the vector field $Z$ is gradient with respect to an Euclidean metric, then the skeleton can be further Whitney substratified. It is likely that the Whitney condition also holds if near its  zeroes  the vector field  $Z$ is   gradient  with respect to any Riemannian metric. However, as far as know, this was never  verified in the literature.
The Whitney condition need not hold if eigenvalues of the linearization of $Z$ at critical points have non-vanishing imaginary parts, as a spiraling phenomenon of trajectories may occur.\footnote{I thank Francois Laudenbach for the discussion of the involved issues.}
 
Condition (L2)   holds for a  much more general class  of taming functions (e.g. when  $\phi$ is Morse-Bott), and hence for the 
  the purposes of this paper we will take the following working definition of a Weinstein structure, extending the class considered in \cite{CE12}:
{  {$\fW=( X,\lambda,Z, \phi)$ is {\em Weinstein}   if it satisfies conditions (L1) and (L2) with the Whitney condition
and also condition 
 \begin{itemize}
\item[(L3)] there exists a smooth family of Weinstein structures
 $\fW_t=( X,\lambda_t,\phi_t)$, $t\geq 0$ such that $(\lambda,\phi)=(\lambda_0,\phi_0)$ and  $\phi_t$ is Morse for $t>0$.
\end{itemize} } 

 \begin{problem} Which conditions (or maybe none?) on $\phi$  and $Z$ are needed to deduce (L2)  and (L3) from (L1)?
\end{problem} 

A. Oancea suggested to me  that a good sufficiently general condition on a Weinstein structure could be to require that  near critical points it is generated by a $J$-convex function with respect to some (not necessarily integrable) almost complex structure $J$, see \cite{CE12}, Chapter 1, for the details.

 \begin{remark}\label{rem:non-skeleton} Note that not  every closed
   subset  $C$ of a symplectic manifold which is   stratified by isotropic strata may serve  as the  skeleton for an appropriately chosen Weinstein structure on a neighborhood of $C$ (compatible with the given  ambient symplectic form). 
Examples of this kind exist already  in $\R^2$. 
 For instance, let $$C:=\{x=0,y\geq 0\}\cup\{x=y^2,y\geq0\}\cup\{y=0,x\geq 0\}\cup\{y=x^2,x\geq 0\}$$  be the union of 4 arcs emanating from the origin. Then there is no  Liouville structure on a neighborhood $U\ni 0$ which has   $C\cap U$  as a part of its skeleton.
  Indeed it is straightforward to check that  for any $1$-form $\lambda$ vanishing on $C\cap U$ one has $(d\lambda)_0=0$. 
  \end{remark} 
 
  \begin{problem}\label{prob:strat-Weinstein}
Find  a necessary and sufficient condition on a compact subset $C$ of a symplectic manifold to serve as the skeleton of some 
\begin{description}\item{a)} Liouville, or
\item{b)}  Weinstein
\end{description}
  structure on its neighborhood. In particular, is it true that a Whitney stratified subset $C$ which is the skeleton of a Liouville structure on its neighborhood also serves as the skeleton of a Weinstein structure?
 \end{problem}

\begin{figure}[h]
\begin{center}
\includegraphics[scale=.9]{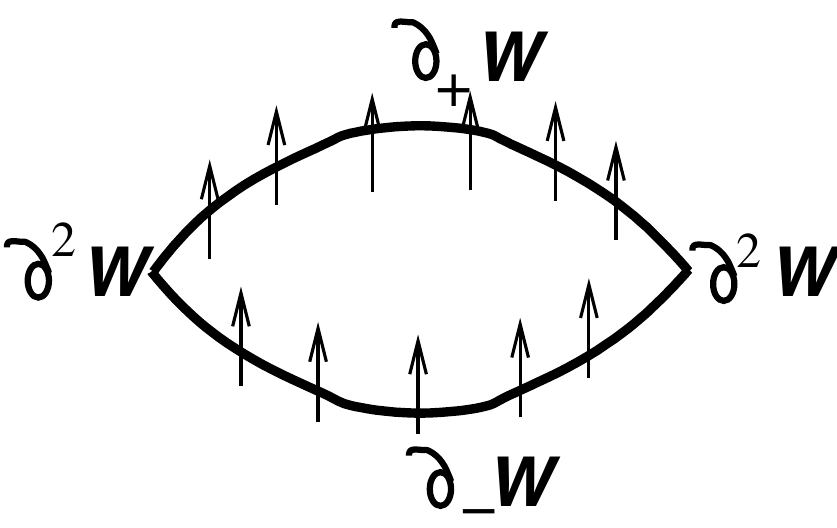}
\caption{Sutured Weinstein cobordism $W$ with corners.}\label{fig:corner}
\end{center}  
\end{figure}

 It is also
 useful to consider a  notion of  a {\em Weinstein cobordism}. This is a cobordism $(W,\p_-W=Y_-,\p_+W=Y_+)$ endowed with a Liouville form $\lambda$, whose  Liouville vector   field  $Z$  is outward transverse to $\p_+W$ and inward transverse to $\p_-W$, and a Lyapunov  (i.e. satisfying condition (L1)) function  $\phi:W\to\R$ for the   field $Z$. We  also postulate   (L3) and an analog of condition (L2) for  the core of  the  Weinstein cobordism, which we define in that case as     the stable manifold of the critical locus of $\phi$.  We will also be   considering 
 Weinstein cobordisms between manifolds with boundary $\p_\pm W$. We   will    view these cobordisms  as    sutured manifolds with  a  corner along the suture, see Fig. \ref{fig:corner}.
 More precisely, we assume that  the boundary  $\p W$ is presented as the union of two manifolds $\p_- W$ and $\p_+W$ with common boundary  $\p^2W=\p_+W\cap \p_-W$  along which it has a corner.   The vector field $Z$  transversely enters $W$  through $\p_-W$ and exits through $\p_+W$, but of course, in this case the function $\phi$ cannot be chosen constant on $\p_-W$ and $\p_+W$.

While any two Weinstein structures on the same symplectic  manifold are (canonically) homotopic as Liouville structures, the problem of existence of a {\em Weinstein} homotopy is widely  open.
\begin{problem} \label{prob:WH1}Let $(\wh X,\lambda_0,\phi_0)$ and $(\wh X,\lambda_1,\phi_1)$ be two completed Weinstein structures on the same symplectic manifold $(\wh X,\om)$. Are they homotopic as Weinstein structures? 
\end{problem}
In particular,
\begin{problem}
\label{prob:WH2}
Let $\fW=(\wh X,\om,\lambda,\phi)$ be a  completed  Weinstein structure, and $f:\wh X\to \wh X$ a symplectomorphism. Is the pull-back Weinstein structure
$f^*\fW$ is Weinstein homotopic to $\fW$?
\end{problem}
 The  Weinstein structure notion  was introduced in \cite{EliGro91}  as a symplectic counterpart of the notion of  Stein complex structure, and inspired   by the work of A.~Weinstein  \cite{Wei91},  see  also \cite{Eli90, Eli95,CE12}. 
 
I discussed the notions and problems considered in this paper with many people. I am especially grateful to
Daniel Alvarez-Gavela, Oleg Lazarev,  David Nadler, Sheel Ganatra,
Vivek Shende, Laura Starkston and Kyler Siegel for contributing many ideas and  suggestions for improvement of the current text. I am very grateful to the anonymous referee  for   critical remarks and many useful suggestions. Special thanks to Nikolai Mishachev for making the pictures.

\section{Weinstein hypersurfaces  and Weinstein pairs}
 Weinstein hypersurfaces are special cases of Liouville hypersurfaces introduced by Avdek in \cite{Avdek}.
 This and other related notions discussed in this paper are also  similar  to ``stops" of Sylvan, \cite{Sylvan} and  Liouville sectors of Ganatra-Pardon-Shende, \cite{GPS}. Related constructions are also considered in    Ekholm-Lekili's paper \cite{EkLe}.
 
 \subsection*{Weinstein hypersurfaces in a contact manifold}
Let $(Y,\xi)$ be a contact  manifold. A codimension 1 submanifold $\Sigma\subset  Y$ with boundary is called {\em Weinstein hypersurface} if there  exists a contact form $\lambda$ for $\xi$ such that $(\Sigma,\lambda|_{\Sigma})$ is compatible with a Weinstein structure on $\Sigma$, i.e.
$d\lambda|_\Sigma$ is symplectic and the Liouville vector field
$Z_\Sigma$ on $\Sigma$ dual to the Liouville form $\lambda|_{\Sigma}$  is outward transverse to $\p\Sigma$ and admits a Lyapunov function
 $\phi:\Sigma\to\R$.  The Reeb vector field for $\lambda$ is  transverse to $\Sigma$ and  the boundary $\p\Sigma$ of a Weinstein hypersurface $\Sigma$  is a codimension  two contact submanifold of $(Y,\xi)$.
 
 Though the induced Weinstein structure on $\Sigma$ depends on the choice of a contact form,    its {\em skeleton is independent of this choice}.
 Indeed, the Liouville fields for the Liouville  structures   $\lambda$ and $f\lambda$ for a positive $f>0$ are proportional. In fact, as it is computed in Lemma 12.1 in \cite{CE12} the form $f\lambda$ is Liouville if and only if $k:=\inf(f+df(Z))>0$, where $Z$ is the Liouville form for $\lambda$, and in that case the Liouville vector field for $f\lambda$ is equal to $\frac1kZ$.  Moreover, the space of  functions  $f$ for which $f\lambda$ is  Liouville (and hence in the considered case Weinstein) is contractible. 
   
 It follows that the  skeleton $\Core(\Sigma,\lambda|_{\Sigma})$   is a stratified subset of $Y$ which consists of  strata which are isotropic, and in the top dimension $n-1$ are Legendrian  for  the contact structure $\xi$.

  \begin{Example}
\begin{enumerate} \item
  {\em Weinstein thickening of a Legendrian submanifold.} Let $\Lambda\subset(Y,\xi)$ be a Legendrian submanifold. Then it admits a Darboux neighborhood $U(\Lambda)$ isomorphic to  $ (J^1(\Lambda),dz-pdq)$, $q\in\Lambda, ||p||^2+ z^2 \leq\eps^2$.
  Then $\Sigma(\Lambda):=U(\Lambda)\cap\{z=0\}$ is a Weinstein hypersurface symplectomorphic
  to the cotangent  ball bundle of $\Lambda$. Up to   Weinstein isotopy the Weinstein thickening $\Sigma(\Lambda)$ is independent of all the choices.  \footnote{Warning: unlike the case of a Legendrian isotopy, an isotopy of Weinstein hypersurfaces does not extend  in general to an ambient contact diffeotopy.}
   \item
  {\em Pages of open books.} According to Giroux's theorem \cite{Giroux-book}, any contact manifold admits an open book decomposition whose  pages  are Weinstein hypersurfaces.
  \item
   {\em Halves of convex hypersurfaces}. Recall that a hypersurface $\Sigma$ in a contact manifold is called {\em convex} if it admits a transverse contact vector field, see \cite{EliGro91, Giroux-convex}. The set $D$ of points where the contact vector field
  is tangent to the contact plane field, called a {\em dividing set}, is  generically  a smooth hypersurface which divides $\Sigma$ into two Liouville manifolds. In many interesting examples these Liouville  manifolds are, in fact, Weinstein, and hence serve a rich source of Weinstein hypersurfaces.
\end{enumerate}
  \end{Example}
  
  Given two Legendrian isotopic submanifolds $ \Lambda_0,\Lambda_1 \subset (Y,\xi)$ their Weinstein thickenings $\Sigma(\Lambda_0)$ and $\Sigma(\Lambda_1)$ are isotopic as Weinstein hypersurfaces. 
  
  \begin{problem}\label{prob:LvsW} Is the converse true?
  \end{problem}
  Here by isotopy we mean an isotopy of unparameterized submanifolds.
  
 Note that an isotopy of Weinstein hypersurfaces carries $\Lambda_0$ to an exact Lagrangian submanifold $\wt\Lambda_1\subset\Sigma(\Lambda_1)$. Moreover, there is a symplectomorphism
 $\psi:\Sigma(\Lambda_1)\to \Sigma(\Lambda_1)$ such that $\psi(\wt\Lambda_1)=\Lambda_1$.   Hence, the positive answer to Problem \ref{prob:LvsW}  would   follow
  from the positive resolution of the following special case of the  {\em nearby Lagrangian conjecture}: Lagrangians which are images of the $0$-section under a global symplectomorphism are Hamiltonian isotopic to  the $0$-section.
  
  If the contact manifold $(Y,\xi)$ is symplectically fillable then one can prove that
 the Legendrian algebras $LHA(\Lambda_0)$ and $LHA(\Lambda_1)$ are isomorphic\footnote{I thank Sheel Ganatra and Tobias Ekholm  for the discussion of this problem.}.  It is likely that this claim  could be generalized  to the case of a general contact manifold $(Y,\xi)$.

  \begin{problem} Is there an analog of  the Legendrian algebra $LHA(\Lambda)$ for a general Weinstein hypersurface? 
  \end{problem}

  Let us  return to the case of the Legendrian homology algebra of a Legendrian submanifold $\Lambda$ and  pick a contact form $\lambda$ such that its Reeb vector field   is tangent to the contact submanifold $\Delta:=\p\Sigma(\Lambda)$. We also choose an almost complex structure $J$ on $\xi$ such that $\xi\cap T(\Delta)$  are $J$-invariant. This allows us to define a deformation $(A[t], D)$ of the  Legendrian differential      algebra $ (A,\p):=LHA(\Lambda)$ as follows.
  For a generating chord $c\in A$ define   $D(c)=\sum\limits_{k\geq 0}(\p_kc)t^k$, where $\p_0=\p$ and    $\p_kc$   counts holomorphic curves with the intersection index $k$ with the symplectization of $\Delta$. This symplectization   is a complex hypersurface in the  symplectization of $Y$, and hence $k\geq 0$.  The sum defining differential $D$ is finite due to the  Gromov  compactness.
 \begin{problem}
  Explore whether the above  construction yields a genuinely new invariant of a Legendrian submanifold.
  \end{problem}

 Given a  Weinstein hypersurface $\Sigma\subset Y$ we slightly extend it to a larger Weinstein hypersurface $\wt\Sigma\supset \Sigma$ such that on $\wt\Sigma\setminus\Sigma$ the Liouville form $\lambda$ can be written as $t\lambda|_{\p\Sigma}$, $t\in[1,1+\eps]$. The  extended hypersurface $\wt\Sigma$ has  a neighborhood $\wt U$ diffeomorphic to $\wt\Sigma\times (-\eps,\eps)$ such that 
 $\lambda|_{\wt U}=\pi^*(\lambda|_{\wt\Sigma})+du$ where $u$ is the coordinate corresponding to the second factor and $\pi$ the projection $\wt U\to\wt \Sigma$.   Note that the level sets $\{u=\const\}$ are translates of $\wt \Sigma$ under the   Reeb flow of the contact form $\lambda$. Pick a non-negative  function $h:\wt\Sigma\to\R$ which is equal to $0$ on $\Sigma$ and to $t-1$ near $\p\wt\Sigma$ and set
 $U(\Sigma)=U_\eps(\Sigma):=\{h^2+u^2\leq\eps^2\}\subset\wt U$.
 The neighborhood $U(\Sigma)$  will be called the {\em  contact surrounding} of a Weinstein hypersurface $\Sigma$.

\begin{proposition}
\label{prop:open-cont} Contact manifolds
 $Y\setminus \overline{U(\Sigma)}$,  $Y\setminus \Sigma$  and $Y\setminus\Core(\Sigma,\lambda|_\Sigma)$ are contactomorphic.
\end{proposition}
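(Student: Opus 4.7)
The plan is to construct a single global contact isotopy of $Y$ that contracts $\overline{U(\Sigma)}$ (and in particular $\Sigma$) backwards onto $\Core(\Sigma,\lambda|_\Sigma)$, and to identify all three complements via its backward flow.

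On the model $\wt U \cong \wt\Sigma \times (-\eps_0, \eps_0)$ with $\lambda = \pi^*\lambda_\Sigma + du$, consider the vector field $V_0 := Z_\Sigma + u\,\p_u$. A direct Cartan computation gives $L_{V_0}\lambda = \lambda$, so $V_0$ is contact, with flow $\Phi_t(q,u) = (\psi^{Z_\Sigma}_t(q),\, e^t u)$ satisfying $\Phi_t^*\lambda = e^t\lambda$. For $t<0$ this backward flow is a strict contraction of $\wt U$ that rescales $u\mapsto e^t u$ and retracts the $\wt\Sigma$-factor along the inward Liouville flow, whose $\omega$-limit set is precisely $\Core(\Sigma,\lambda|_\Sigma)$. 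Consequently
$$\bigcap_{t\geq 0}\Phi_{-t}(\overline{U(\Sigma)}) = \bigcap_{t\geq 0}\Phi_{-t}(\Sigma) = \Core(\Sigma,\lambda|_\Sigma).$$
Globalize by truncating the contact Hamiltonian $H_0 := \lambda(V_0) = \iota_{Z_\Sigma}\lambda_\Sigma + u$ with a bump $\chi:Y\to[0,1]$ equal to $1$ on $\overline{U(\Sigma)}$ and supported in a slightly larger compact subset of $\wt U$, and let $\wt V := X_{\chi H_0}$. Then $\wt V$ is a compactly supported contact vector field on $Y$ that agrees with $V_0$ on $\overline{U(\Sigma)}$; its global flow $\wt\Phi_t$ therefore satisfies $\wt\Phi_{-t}|_{\overline{U(\Sigma)}} = \Phi_{-t}$ for $t\geq 0$, and each $\wt\Phi_{-t}$ is a global contactomorphism of $Y$. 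We obtain, for each finite $t\geq 0$, contactomorphisms
$$Y\setminus \overline{U(\Sigma)} \;\cong\; Y\setminus \wt\Phi_{-t}(\overline{U(\Sigma)}),\qquad Y\setminus \Sigma \;\cong\; Y\setminus \wt\Phi_{-t}(\Sigma),$$
both of whose target closed sets shrink onto $\Core(\Sigma,\lambda|_\Sigma)$ as $t\to\infty$.

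The final step is the passage to the limit $t\to\infty$, which yields the desired identifications with $Y\setminus\Core(\Sigma,\lambda|_\Sigma)$. Outside the support of $\wt V$ the flow is stationary, and inside $\overline{U(\Sigma)}\setminus\Core(\Sigma,\lambda|_\Sigma)$ the orbits of $V_0$ converge explicitly to points of $\Core(\Sigma,\lambda|_\Sigma)$. The main obstacle is to control orbits in the transitional cutoff zone $\{0<\chi<1\}$ and to verify that the pointwise limit $\wt\Phi_{-\infty}$ defines a smooth bijection onto the correct target. This is handled by choosing $\chi$ so that its level surfaces are transverse to $V_0$, forcing orbits to cross the transition region monotonically and accumulate on a point of the frozen region $\{\chi=0\}$ rather than being absorbed into $\Core(\Sigma,\lambda|_\Sigma)$ (which would destroy injectivity onto $Y\setminus\Core(\Sigma,\lambda|_\Sigma)$). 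For the subtler intermediate case $Y\setminus\Sigma$ one uses a small variant of the construction: a contact Hamiltonian that vanishes to higher order along $\Sigma$, so that $\Sigma$ is preserved setwise by the flow and the limit does not collapse neighboring points of $\overline{U(\Sigma)}\setminus\Sigma$ into $\Core(\Sigma,\lambda|_\Sigma)$. Contactness of the limit is immediate from that of each $\wt\Phi_{-t}$, and the three-way equivalence follows by composition.
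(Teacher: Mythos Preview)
Your strategy --- globalize the contracting contact field $V_0=Z_\Sigma+u\,\p_u$ by a cutoff and pass to the limit $t\to\infty$ of the backward flow $\wt\Phi_{-t}$ --- has a genuine gap: the pointwise limit of these contactomorphisms is not a diffeomorphism. Take any point $p$ in the transition annulus just outside $\p U(\Sigma)$. Since $\wt V$ agrees with the outward-pointing $V_0$ on $\overline{U(\Sigma)}$, it is outward transverse to $\p U(\Sigma)$; by continuity the backward flow carries $p$ across $\p U(\Sigma)$ into $U(\Sigma)$ in finite time, after which $\wt V=V_0$ drives it toward $\Core(\Sigma)$. Thus $\lim_{t\to\infty}\wt\Phi_{-t}(p)\in\Core(\Sigma)$, so your limit map sends an open subset of $Y\setminus\overline{U(\Sigma)}$ into the core rather than onto $Y\setminus\Core(\Sigma)$. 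Transversality of the level sets of $\chi$ to $V_0$ does not help: it makes backward orbits cross the transition region monotonically, but in the \emph{inward} direction (toward increasing $\chi$), not toward the frozen region. The proposed variant for $Y\setminus\Sigma$ has the same defect, and in any case a Hamiltonian vanishing to higher order along $\Sigma$ gives $X_H=0$ there, so the flow fixes $\Sigma$ and you are again reduced to a limit argument. The final claim that ``contactness of the limit is immediate from that of each $\wt\Phi_{-t}$'' is simply false: limits of diffeomorphisms need not be injective, smooth, or even continuous.

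The paper sidesteps the limit entirely by working locally inside $U(\Sigma)$. The same field $v=-V_0$ is inward transverse to $\p U(\Sigma)$, and its forward trajectories from $\p U(\Sigma)$ sweep out both $U(\Sigma)\setminus\Core(\Sigma)$ (where the flow is already complete, yielding the identification with $(\p U(\Sigma)\times[0,\infty),\wh\xi)$ directly) and $U(\Sigma)\setminus\Sigma$ (where it is not). For the latter the paper invokes Lemma~\ref{lm:open-cont}, which produces a \emph{complete} contact vector field with the same trajectories by an inductive cut-and-push construction --- precisely the device that replaces your ill-behaved limit. Once both open pieces are identified with the same invariant half-cylinder over $\p U(\Sigma)$, all three complements arise by gluing that half-cylinder to $Y\setminus U(\Sigma)$ along $\p U(\Sigma)$, and the proposition follows.
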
  
Let us first recall a few basic facts about convex hypersurfaces in contact manifolds.
If a germ  $\xi$ of a  contact structure along a closed hypersurface  $V$ in a $(2n-1)$-dimensional manifold   admits a transverse contact vector field $v$ then we canonically can construct a contact structure $\wh\xi$ on $V\times\R$ which is invariant with respect to translations along the second factor and whose germ along any slice $V\times t$, $t\in\R$,  is isomorphic to  $\xi$. 
We will  call  $\wh\xi $ the {\em invariant extension} of the convex germ $\xi$.

\begin{lemma}\label{lm:open-cont}
Let $V$ be a closed  $(2n-2)$-dimensional manifold and  $\xi$ a contact structure on  $ Y=V\times[0,\infty)$   which admits a contact vector field $v$  inward transverse to $V\times 0$ and such that its trajectories intersecting $V\times 0$  fill the whole manifold $Y$ (we do not require $v$ to be complete). Then $(Y,\xi)$ is contactomorphic  to $(V\times [0,\infty),\wh\xi)$, where $\wh\xi$ is the invariant extension of the germ of $\xi$ along $V\times0$. Moreover, for any compact set $C\subset Y$, $\Int C\supset V\times 0$, there exists a  contactomorphism $h:(Y,\xi)\to (V\times[0,\infty),\wh\xi)$ which is equal to the identity on $V\times 0$ and which sends the contact vector field $v|_C$ to  the vector field $\frac{\p}{\p t}$.
\end{lemma}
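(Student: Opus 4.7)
The plan is to use the forward flow of $v$ from $V\times 0$ to first identify $(Y,\xi)$ with an open subset $\mathcal{D}\subset V\times[0,\infty)$ carrying $\wh\xi|_{\mathcal{D}}$, and then to exhibit a contactomorphism from $(V\times[0,\infty),\wh\xi)$ onto $(\mathcal{D},\wh\xi|_{\mathcal{D}})$ that is the identity on a neighborhood of $V\times 0$ containing $\Psi^{-1}(C)$.

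For the identification step, let $T:V\to(0,\infty]$ denote the maximal forward time of existence of the $v$-orbit through $(x,0)$; the inward transversality of $v$ to $V\times 0$ ensures $T>0$, makes $T$ lower semi-continuous, and also prevents any orbit from re-entering $V\times 0$. Setting $\mathcal{D}:=\{(x,t)\in V\times[0,\infty):t<T(x)\}$ and $\Psi(x,t):=\psi_v^t(x)$, uniqueness of flow lines gives injectivity while the filling hypothesis gives surjectivity, so $\Psi:\mathcal{D}\to Y$ is a diffeomorphism. Since $\Psi_*(\partial/\partial t)=v$ is a contact vector field, the pullback $\Psi^*\xi$ is preserved by $\partial/\partial t$ and agrees with $\xi$ at $V\times 0$; the invariant extension $\wh\xi$ being uniquely characterized by these two properties, one concludes $\Psi^*\xi=\wh\xi|_{\mathcal{D}}$. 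Hence $\Psi^{-1}:(Y,\xi)\to(\mathcal{D},\wh\xi|_{\mathcal{D}})$ is already a contactomorphism, equal to the identity on $V\times 0$ and sending $v$ to $\partial/\partial t$.

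To promote this to a contactomorphism onto all of $(V\times[0,\infty),\wh\xi)$, choose $N>0$ with $\Psi^{-1}(C)\subset V\times[0,N]\subset\mathcal{D}$ (possible since $\Psi^{-1}(C)$ is compact) and a smooth function $\tau:V\to(N,\infty)$ with $\tau(x)<T(x)$. Build a fiberwise diffeomorphism $\sigma(x,t):=(x,s(x,t))$ from $V\times[0,\infty)$ onto an open subset of $\mathcal{D}$, with $s(x,t)=t$ for $t\le N$ and $s(x,\cdot):[N,\infty)\to[N,\tau(x))$ a diffeomorphism for each $x$. Interpolate via $\sigma_r(x,t):=(x,(1-r)t+r\,s(x,t))$ for $r\in[0,1]$: each $\sigma_r$ is a fiberwise diffeomorphism into $\mathcal{D}$, so the family of contact forms $\alpha_r:=\sigma_r^*\wh\alpha$ on $V\times[0,\infty)$ (with $\wh\alpha$ an invariant contact form for $\wh\xi$) depends smoothly on $r$ and coincides with $\wh\alpha$ on $V\times[0,N]$. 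A standard Moser argument then produces a time-dependent contact vector field $X_r$ supported in $V\times(N,\infty)$ whose flow $\phi_r$ satisfies $\phi_r^*\alpha_r\propto\wh\alpha$; setting $F:=\sigma\circ\phi_1$ and $h:=F^{-1}\circ\Psi^{-1}$ produces the desired contactomorphism, which on $C$ coincides with $\Psi^{-1}$ and therefore sends $v|_C$ to $\partial/\partial t$.

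The technical crux is verifying that the Moser vector field $X_r$ integrates for all $r\in[0,1]$ on the non-compact manifold $V\times[0,\infty)$. Since the deformation $\alpha_r-\wh\alpha$ is supported in $V\times(N,\infty)$, varies smoothly in $r$, and is generated by a fiberwise time reparametrization $s(x,t)$, compactness of $V$ together with the explicit fibered structure of $\sigma_r$ should allow one to check directly that $X_r$ is uniformly bounded with controlled $\partial/\partial t$-component, so its flow stays inside $V\times[0,\infty)$ for all $r\in[0,1]$.
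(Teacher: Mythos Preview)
Your argument has a genuine gap in the second step. The map $F=\sigma\circ\phi_1$ you build is a contactomorphism from $(V\times[0,\infty),\wh\xi)$ onto its image, which is the set $\{(x,t):t<\tau(x)\}$. But this is a \emph{proper} open subset of $\mathcal{D}$, since you chose $\tau(x)<T(x)$ strictly. Hence $F^{-1}$ is only defined on $\{t<\tau\}$, and the composite $h=F^{-1}\circ\Psi^{-1}$ is only defined on the proper subset $\Psi(\{t<\tau\})\subsetneq Y$. You have produced a contactomorphism from a sub-domain of $Y$ onto the full cylinder, not from $Y$ itself.

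This gap is not a technicality that can be removed by a better choice of $\tau$: the escape-time function $T$ is in general only lower semi-continuous, so there is no single smooth $\tau$ with $\{t<\tau\}=\mathcal{D}$. To reach all of $\mathcal{D}$ one is forced into an exhaustion argument, choosing $\tau_0<\tau_1<\cdots\nearrow T$ and inductively stretching each $\{t<\tau_j\}$ onto the next while stabilizing on compacta. That is essentially the paper's proof, phrased on the $Y$ side rather than on $\mathcal{D}$: it constructs a \emph{complete} contact vector field $\wt v$ on $Y$, equal to $v$ on $C$, by an inductive cut-off-and-push-forward procedure along an exhausting sequence of compact sets; the flow of $\wt v$ from $V\times 0$ is then the desired contactomorphism. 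Your first step (the identification $\Psi$) is correct and is implicit in the paper's argument, but the hard part --- bridging the gap between the possibly irregular domain $\mathcal{D}$ and the full cylinder --- is exactly what the inductive construction handles and what your single Moser step cannot.

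As a secondary point, the Moser completeness you defer to the end is also not clear: the deformation $\alpha_r-\wh\alpha$ does not decay as $t\to\infty$ (it limits to a nonzero form involving $d\tau$), so boundedness of $X_r$ genuinely requires an estimate rather than a soft argument.
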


\begin{proof} It is sufficient to construct a {\em complete} contact vector field $\wt v$ on $V\times[0,1)$ which coincides with $v$ on $C$ and whose trajectories intersecting $V\times 0$ fill the whole manifold  $V\times[0,1)$. We will construct it using the following inductive process. Take a sequence of compact sets $C_0=C,  C_1,\dots,$ which exhausts $Y$, i.e.   $\bigcup\limits_0^\infty C_j=Y$ and  $C_j\subset \Int C_j$, $j=0,1,\dots$.
 Let  $v_1$ be a contact vector field obtained by cutting off $v$ outside $C_0$ but inside $C_1$. Let $h_1$ be the time $T_1>1$ flow map of $v_1$, where $T_1$ is chosen sufficiently large to ensure that $h_1(V\times 0)\subset C_1\setminus C_0$. Denote by  $\wt C_0$ the domain bounded by $V\times 0$ and $h_1(V\times 0)$ and by $\wt v_1$ the contact vector field equal to $v_1$ on $\wt C_0$ and to the  push-forward vector field $(h_1)_*v$ on $Y\setminus \wt C_0$.  Let $v_2$   be a contact vector field obtained by cutting off $\wt v_1$ outside $C_1$ but inside $C_2$ and denote by $h_2$ the time   $T_2>T_1+1$  flow of $v_2$, where  $T_2$ is chosen such that $h_2(V\times 0)\subset C_2\setminus C_1$.  Denote by  $\wt C_1$ the domain bounded by $V\times 0$ and $h_2(V\times 0)$ and by $\wt v_2$ the contact vector field equal to $v_2$ on $\wt C_1$ and to the  push-forward vector field $(h_1)_*\wt v_1$ on $Y\setminus \wt C_1$. Continuing this process we construct a sequence of contact vector fields $\wt v_1,\wt v_2,\dots,$ which stabilize on compact sets $C_1,C_2, \dots$ and converge to the contact vector field $\wt v$ on $Y$ with the required properties.
 \end{proof}
 \begin{proof}[Proof of Proposition \ref{prop:open-cont}]
 The contact vector field  $v =
 -Z_\Sigma-u\frac{\p}{\p u}$ is transverse to $\p U(\sigma)$ and  retracts $U(\Sigma)$ to $\Core(\Sigma,\lambda_\Sigma)$, and hence the contact structure on $U(\Sigma)\setminus \Core(\Sigma,\lambda_\Sigma)$ is canonically isomorphic to $\p U(\Sigma)\times[0,\infty)$ endowed with the  invariant extension $\wh\xi$ of the germ of contact structure $\xi$ along $\p U(\Sigma)$.  On the other hand,  $v$ is transverse to  $\p U_\delta(\sigma)$ for each $\delta\leq\eps $ and $\bigcup\limits_{\delta\in(0,\eps]}\p U_\delta(\Sigma)=U(\Sigma)\setminus\Sigma$. Hence, applying Lemma \ref{lm:open-cont} we conclude that $(U(\Sigma)\setminus\Sigma,\xi)$ is contactomorphic to $(\p U(\Sigma)\times[0,\infty),\wh\xi)$, and the claim follows. 
   
\end{proof}
  \begin{remark}  One of the corollaries  of Lemma \ref{lm:open-cont} is that
   {\em any open  domain in the standard contact $(\R^{2n+1}, dz+\sum x_idyi-y_idx_i)$ which is star-shaped with respect to the contact vector field $2z\frac{\p}{\p z}+\sum x_i\frac{\p}{\p x_i}+y_i\frac{\p}{\p y_i}$ is contactomorphic to $\R^{2n+1}$.} 
   On the other hand, in the standard contact $\R^3$ any open  domain diffeomorphic to $\R^3$ is contactomorphic to $\R^3$, see \cite{eliash-R3}.
    \end{remark}
  \begin{problem} Is there   a domain  in   the standard contact $\R^{2n+1}$, $n>2$,  which  is diffeomorphic to the closed ball, has      convex in contact sense  boundary, but whose interior is not contactomorphic to the standard $\R^{2n+1}$? Or even are there any open domains in    the standard contact $\R^{2n+1}$, $n>2$,
  which are diffeomorphic but not contactomorphic to $\R^{2n+1}$?
  \end{problem}

 \subsection*{Weinstein pairs}
 
 A  {\em Weinstein pair $(\fW,\Sigma)$} consists of a Weinstein domain $\fW=(  X,\lambda,\phi)$ together with a Weinstein  hypersurface
 $(\Sigma,\lambda|_\Sigma)$ in its  boundary $\p  X$. Equivalently, a Weinstein pair can be viewed as  a Weinstein manifold with cylindrical end, together with   a Weinstein hypersurface in its ideal contact boundary.
 
  Let $\Lambda=\Core(\Sigma)$ be the skeleton  of $\Sigma$ and
 $$\wh\Lambda:=\mathop{\bigcup}\limits_{t\geq 0}Z^{-t}(\Lambda)$$ be its saturation by the trajectories of the Liouville vector field $Z$. 
 The union
 $$\Core(  X,\Sigma):=\Core(X)\cup\wh\Lambda$$
 is called the {\em core}, or the {\em skeleton} of the Weinstein pair.
 
  It turns out that it is possible to   modify the Liouville form $\lambda$ on $ X$ in a neighborhood of $\Sigma$ in $X$  to make the attractor of the modified  Liouville vector field   equal to   the skeleton
$ \Core( X,\Sigma)$.

Given a Weinstein   pair $(\fW,\Sigma)$, $\fW=(X, \omega,\lambda,Z,\phi)$, let  $U=U(\Sigma)\subset\p X$
be   its  contact surrounding. Denote by $Z_\Sigma$ the Liouville field dual to $\lambda|_\Sigma$ and by $\phi_\Sigma$ its Lyapunov function. 
 A Liouville  form $\lambda_0$, the corresponding Liouville vector field   $Z_0$   for $\omega$ on $X$ and a  smooth function $\phi_0:X\to\R$ are called {\em adjusted } to the structure of the pair    if (see Fig. \ref{fig:adjusted})
 \begin{itemize}
 \item $Z_0$ is tangent to $\p X$ on $U(\Sigma)$ and transverse to $\p X$ elsewhere;
 \item $Z_0|_{U(\Sigma)}=Z_\Sigma+u\frac{\p}{\p u}$;
   \item  the attractor   $\mathop{\bigcap}\limits_{t\geq 0}Z_0^{-t}( X)$ of the Liouville  vector  field $-Z_0$ coincides with the core
$\Core( X,\Sigma)$ of the Weinstein pair;
 \item the  function $\phi_0:X\to\R$    is Lyapunov for $Z_0$ and   such that
 $\phi_0|_{U(\Sigma)}=\phi_\Sigma+u^2$ and $\phi_0$ has no critical values  $ \geq\eps^2=\phi_0|_{\p U(\Sigma)} $.
 \end{itemize}
 
  \begin{prop}\label{prop:pair-adjust} Given a Weinstein   pair $(\fW,\Sigma)$, $\fW=(X,\lambda,\phi)$, there exist a   Liouville form $\lambda_0$ for $\om$ and a function $\phi_0:X\to\R$ such that
  \begin{itemize}
  \item $\lambda_0,\phi_0$ are adjusted to $(\fW,\Sigma)$;
  \item $\lambda_0$   coincides with $\lambda$ outside  a neighborhood of $\Sigma$;
\end{itemize}  Moreover, there exists an extension $\wt \lambda,\wt\phi$ of  $(\lambda_0,\phi_0)$    to  a slightly bigger domain $\wt X\supset X$ such that the  $\wt\fW:=(\wt X,\wt\lambda,\wt \phi)$ is a Weinstein domain and  $\Core(\wt\fW)=\Core( \fW,\Sigma)$.
  \end{prop}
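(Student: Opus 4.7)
The plan is to build $\lambda_0=\lambda+dG$ by adding a compactly supported exact one-form $dG$ near $\Sigma$, chosen so that the new Liouville vector field $Z_0$ becomes tangent to $\p X$ precisely on the contact surrounding $U(\Sigma)$ and equals the prescribed $Z_\Sigma+u\,\p_u$ there. Combining the Liouville collar of $\p X$ with the description of $U(\Sigma)$ recalled before the proposition, one obtains coordinates $(y,u,s)\in\wt\Sigma\times(-\eps',\eps')\times(-\delta,0]$ on a neighborhood of $\Sigma$ in $X$, in which
$$\lambda=e^s(\alpha+du),\qquad \alpha:=\lambda|_{\wt\Sigma},$$
and the original Liouville field is simply $Z=\p_s$. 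The identity $\alpha(Z_\Sigma)=\iota_{Z_\Sigma}\iota_{Z_\Sigma}d\alpha=0$ makes the computation below clean.

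Pick cutoffs $\rho(y,u)$ supported in $U(\Sigma)$ and equal to $1$ on a smaller contact surrounding of $\Sigma$, and $\chi(s)$ monotone with $\chi(0)=1$, $\chi'(0)=0$, and $\chi\equiv 0$ for $s\le-\delta/2$; set $G:=-\rho(y,u)\chi(s)e^s u$ (extended by $0$) and $\lambda_0:=\lambda+dG$. With the ansatz $Z_0=A\,\p_s+B\,\p_u+CZ_\Sigma$, expanding $\iota_{Z_0}d\lambda=\lambda_0$ in the basis $\{\alpha,du,ds\}$ and solving on $\{\rho=1\}$ yields $A=1-\chi(s)$, $B=u(\chi'(s)+\chi(s))$, $C=\chi(s)$. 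At $s=0$ this specializes to $Z_0=Z_\Sigma+u\,\p_u$, as required; outside the support of $\rho\chi$ one has $Z_0=Z$, so that outward transversality of $Z_0$ to $\p X\setminus U(\Sigma)$ is preserved.

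For the Lyapunov function, take $\phi_0=\phi_\Sigma(y)+u^2+\psi(s)$ on a neighborhood of $U(\Sigma)$ in $\p X$ with $\psi$ a small strictly increasing function vanishing for $s\le-\delta/2$, and glue to $\phi$ elsewhere by a convex combination within the convex space of Lyapunov functions for $Z_0$; condition (L3) is inherited from $\fW$ and $\Sigma$ once small Morse perturbations of the glued $\phi_0$ are exhibited. The attractor computation proceeds coordinate-wise on the modification region: the $u$-component of $Z_0$ satisfies $\dot u=u(\chi'+\chi)$ with $\chi'+\chi>0$ on $\{\chi>0\}$, so the only forward orbits bounded in $X$ have $u\equiv 0$; along $\{u=0\}$ the $y$-component satisfies $\dot y=\chi(s)Z_\Sigma(y)$, and since $Z_\Sigma$ is expanding with $\Core(\Sigma)=\Lambda$, the only orbits remaining in $X$ forever have $y\equiv y_0\in\Lambda$; the $s$-component $\dot s=1-\chi(s)$ then asymptotes to $s=0$. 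Outside the modification region $Z_0=Z$, whose forward-stable set is $\Core(X)$, and piecing trajectories together gives $\bigcap_{t>0}Z_0^{-t}(X)=\Core(X)\cup\wh\Lambda=\Core(\fW,\Sigma)$. To produce $\wt\fW=(\wt X,\wt\lambda,\wt\phi)$, attach a thin outward collar to $\p X$ and add a small transverse component (essentially $\eps\,\p_s$) to $Z_0$ beyond $\p X$ near $U(\Sigma)$, so that $\wt Z$ is outward transverse to $\p\wt X$ everywhere and no new critical points appear.

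The main obstacle is the coupled construction of $\lambda_0$ and $\phi_0$ in the transition strip $-\delta/2\le s\le 0$: verifying (L1) uniformly as $Z_0$ rotates from the purely transverse $\p_s$ in the interior to the purely tangential $Z_\Sigma+u\,\p_u$ on $\p X$, ensuring $\chi'+\chi>0$ so that no new zeros of $Z_0$ appear in the strip, and checking that no stratum of the skeleton arises outside $\Core(X)\cup\wh\Lambda$. Choosing the profiles $\rho$ and $\chi$ with the right monotonicity, together with a compatible cutoff in $\phi_0$, is the delicate bookkeeping; once this is in place, (L2) and (L3) for $\wt\fW$ follow from the corresponding properties of $\fW$ and $\Sigma$.
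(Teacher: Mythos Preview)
Your approach is essentially the same as the paper's: both modify $\lambda$ by subtracting the differential of a cutoff of the function $su$ (in the paper's linear collar coordinate $s$, which is your $e^s$), so that the new Liouville field becomes $Z_\Sigma+u\,\p_u$ on $U(\Sigma)$. The paper's argument is a one-paragraph sketch that computes the Hamiltonian vector field of $su$ and leaves the cutoffs, the construction of $\phi_0$, and the extension to $\wt X$ entirely to the reader; you have simply written out those details (the profiles $\rho,\chi$, the explicit $A,B,C$, the attractor analysis, and the Lyapunov gluing) in logarithmic coordinates.
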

  
  To construct the adjusted Liouville field $Z_0$ let us write the form $\lambda$ near $\p X$ as $s(du+\lambda_\Sigma)$   near $U(\Sigma)$. Note that the Hamiltonian vector field $Y$ for a function $su$ near $U(\Sigma)$ coincides with $-s\frac{\p}{\p s}+u\frac{\p}{\p u}+Z_\Sigma$, and hence by appropriately cutting off the function
  $su$ outside    a neighborhood  of  $U(\Sigma)$ and subtracting the differential $dg$ of the resulting function $g$ to the Liouville form $\lambda$ we get the Liouville form $\lambda_0$ with the required properties.
   Note that the form $\lambda_0|_U$ is no more contact. Instead, $\lambda_0|_U=\pi^*(\lambda|_\Sigma)$.

Suppose that $ \lambda_0,\phi_0 $ are adjusted to the Weinstein pair $(\fW,\Sigma)$. Recall that $\phi_0|_{\p U(\Sigma)}=\eps^2$. Denote
$X_0=\{\phi_0\leq\eps^2\}$. We note that $\phi_0$ has no critical points in $X\setminus \Int X_0$, and hence $X_0$ is a manifold with boundary with a corner along $\p U(\Sigma)$ which is homeomorphic to $X$. We will sometime refer to $(X_0,\lambda_0,\phi_0)$ as the {\em cornered version} of the Weinstein pair
$(\fW,\Sigma)$.\footnote{The completion of the cornered version of a Weinstein pair is a special case of a Liouville sector in the sense of \cite{GPS}.}    For instance, the cornered version of the standard Weinstein ball $B^{2n}$ is the cotangent  ball bundle of $D^n$.
Thus, it is always possible to go back and forth between the original and adjusted (cornered) versions of a Weinstein pair, and we will  be using  the term ``Weinstein pair" for   both versions.

\begin{figure}[h]
\begin{center}
\includegraphics[scale=.6]{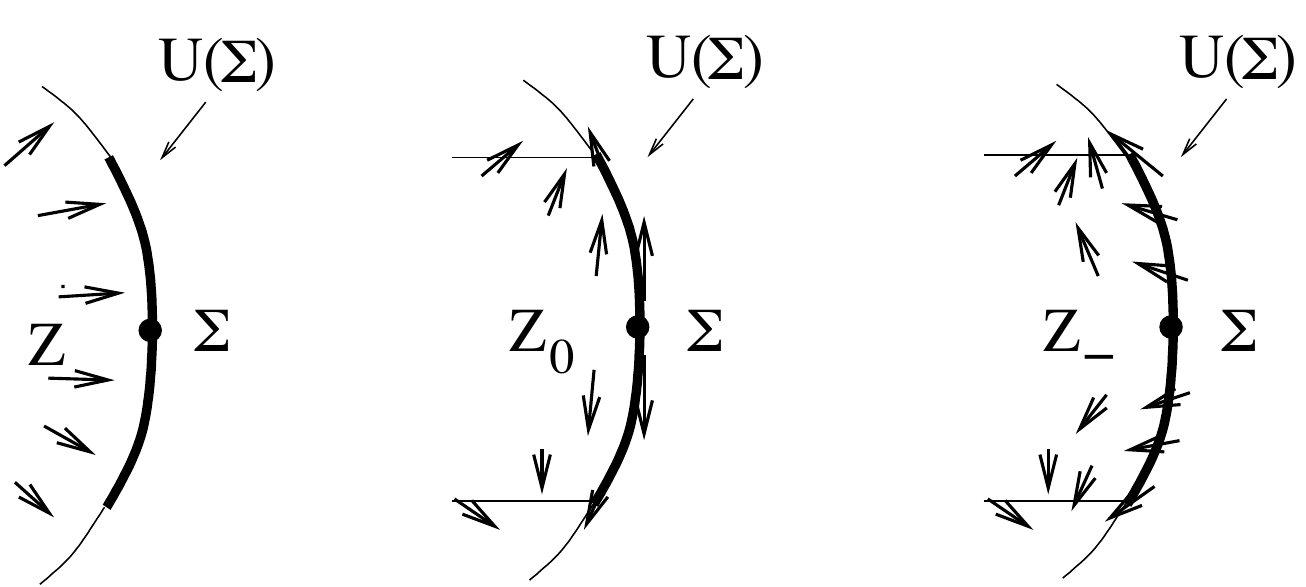}
\caption{Modifications of a Weinstein pair structure.}\label{fig:adjusted}
\end{center}  
\end{figure}

  \begin{remark}\label{rem:neg-pos}
   There are several other useful adjustments  of  a Weinstein pair structure. 
    Ekholm and Lekili in \cite{EkLe}, Section B.3,  are doing a similar to  the cornered version construction   by
 deforming the boundary $\p X$ near $U(\Sigma)$  without changing $Z$, as on Fig. \ref{fig:ELS}.  Without defining here Sylvan's stop structure we just say that for a given Weinstein pair there is a contractible space of choices of stop structures on the completion.
 
 \smallskip One can    also transform a Weinstein pair into a Weinstein cobordism whose negative boundary is $U(\Sigma)$, see Fig. \ref{fig:adjusted}:

{\em  Let $(X_0,\lambda_0,Z_0, \phi_0)$ be the cornered  adjusted  version of a Liouville pair structure $(\fW,\Sigma)$, as in  Proposition \ref{prop:pair-adjust}. There exists a  Liouville form $\lambda_-$ on $ X_0$  such that
   $(X_0, \lambda_-, \phi_0)$ is a sutured Weinstein cobordism structure with $\p_-X_0= U(\Sigma)$, and 
  $\Core(X_0,\lambda_-,\phi_0)=\Core(\fW,\Sigma)$.
 }
 
 To obtain such a form $\lambda_-$ one subtracts  from $\lambda$ the differential of the appropriately cut off function $2su$ instead of the function $su$ used to modify $\lambda$ into $\lambda_0$.
 
\begin{figure}[h]
\begin{center}
\includegraphics[scale=.6]{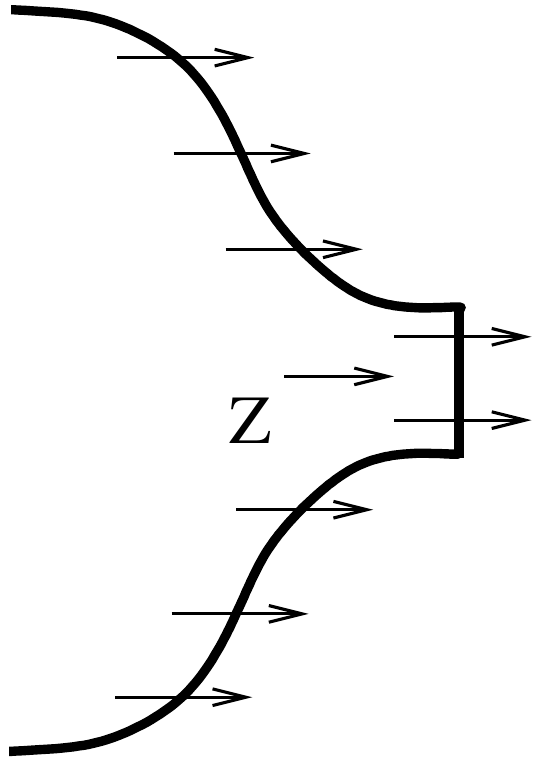}
\caption{Ekholm-Lekili deformation of $\p X$.}\label{fig:ELS}
\end{center}  
\end{figure}

\end{remark}

\section{Operations on Weinstein pairs} 
\subsection{Splitting and gluing of Weinstein pairs}\label{sec:gluing-splitting}
Let $\fW=(X,\lambda,Z,\phi)$ be a Weinstein domain.
A  hypersurface 
$(P,\p P)\subset (X,\p X)$ is called   {\em splitting} for  $\fW$ if it  satisfies the following conditions:
\begin{description}
\item{-} $\p P$ splits the boundary  $\p X$ into two parts, 
$\p X=Y_-\cup Y_+$ with $\p Y_-=\p Y_+=Y_+\cap Y_-=\p P$ (and respectively,  $P$ divides $X$ into two parts $X_+$ and $X_-$ with  
$\p X_-=P\cup Y_-, \p X_+=P\cup Y_+$ and $X_+\cap X_-=P$;
\item{-} the Liouville vector field $Z$ is tangent to $P$;
\item{-} there exists a    hypersurface $(S,\p S)\subset (P,\p P)$  which is Weinstein for the restricted Liouville form $\lambda|_S$, tangent to the vector field $Z$ and   intersects all leaves of the characteristic foliation  $\FF$ of the hypersurface $P$; we will refer to $S$ as the  {\em Weinstein soul} of the  splitting hypersurface $P$ and   denote it by $\So(P)$.
\end{description}
Note that the latter condition  together with  Lemma \ref{lm:open-cont} imply that $P$ is contactomorphic to the contact surrounding of its Weinstein soul.

 It follows that $(S,\lambda|_S,\phi|_S; \p S)$ is a codimension two  Weinstein  subdomain of $X$ and $\Core(S,\lambda|_S,\phi|_S)=\Core(\fW)\cap P$.
Moreover,  $(\fW_\pm;S)$, where  $\fW_\pm:=(X_\pm,\lambda|_{X_\pm},\phi|_{X_\pm})$ are cornered   Weinstein pairs and $\Core(\fW_\pm;S)=\Core(\fW)\cap X_\pm.$

The {\em gluing} construction  reverses the splitting. This operation was  considered by Avdek in \cite{Avdek} in the context of Liouville hypersurfaces.
 Let $(\fW,\Sigma)$ and  $(\fW',\Sigma')$ be two Weinstein pairs and $(X_0,\lambda_0,\phi_0)$, $(X_0',\lambda'_0,\phi'_0)$    their cornered forms.   Let  $F:(\Sigma,\lambda|_\Sigma,\phi|_{\Sigma})\to
(\Sigma',\lambda'|_{\Sigma'},\phi'|_{\Sigma'})$ be a Weinstein isomorphism. We extend $F$ to   a contactomorphism $ U(\Sigma)\to U(\Sigma')$, still denoted by $F$, and use it to define  a  domain   $$X\mathop{\sqcup}\limits_F X':=X_0\sqcup X'_0/\{(x\in U(\Sigma))\sim( F(x)\in U(\Sigma')).$$ Then the Liouville forms $\lambda_0$ and $\lambda_0'$, as well as  Lyapunov functions  $\phi_0:X_0\to\R$ and $\phi_0':X_0'\to\R$, can be  glued together to define a Weinstein structure $  (\fW,\Sigma)\mathop{\cup}\limits_F(\fW',\Sigma'):=(X_F,\lambda_F,\phi_F)$,
see Fig. \ref{fig:gluing}.

\begin{figure}[h]
\begin{center}
\includegraphics[scale=.8]{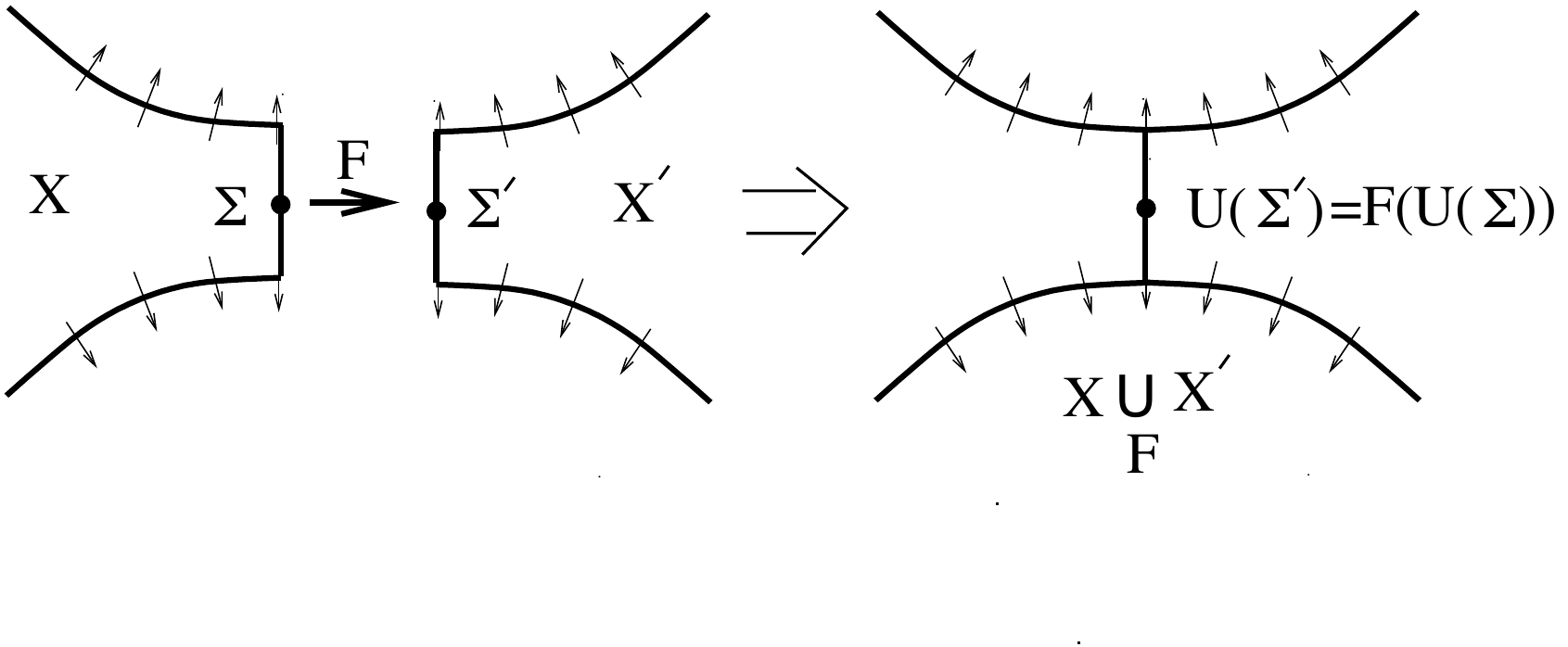}
\caption{Gluing of Weinstein pairs.}\label{fig:gluing}
\end{center}  
\end{figure}
 Note that $$\Core(X_F,\lambda_F,\phi_F)=\Core(X,\Sigma)
 \mathop{\cup}\limits_{F|_{\Core(\Sigma)}}\Core(X',\Sigma').$$

Note that  the constructed  Weinstein domain $X_F$ contains $U(\Sigma)$ as its splitting hypersurface. Applying the above  described splitting construction we get back the   Weinstein pairs
 $(\fW,\Sigma)$ and  $(\fW',\Sigma')$.

The gluing of Weinstein pairs   is a generalization of  the Legendrian surgery  construction (or rather Weinstein handle attachment). When $\Sigma=\Sigma(\Lambda)$ for a Legendrian $\Lambda\subset \p X$, $X'=B^{2n}$ and $\Sigma'=\Sigma(\Lambda_0)$, where $\Lambda_0$ is the Legendrian unknot in $S^{2n-1}=\p B$, then $(X_F,\lambda_F,\phi_F)$ is the Weinstein $n$-handle attachment to $X$  along $\Lambda$. Conversely,  the general   gluing operation $  (\fW,\Sigma)\mathop{\cup}\limits_F(\fW',\Sigma')$  can be decomposed     into  a sequence of subcritical and critical handle attachments. To do  that, one fixes  first a Weinstein handle decomposition of $\Sigma$, and then  for each  handle of index $k$  of this decomposition one needs to attach   a handle of index $k+1$ to  the glued domains. For instance, for a handle of index $0$ centered at a point $p\in\Sigma$ one attaches a handle of index $1$    along an arc connecting the point $p\in \Sigma$ with its image $p'=F(p)\in\Sigma'$ under the gluing map. 

Both, splitting and gluing constructions can be naturally generalized to the relative setting.  Let $(\fW,\Sigma)$ and  $(\fW',\Sigma')$ be two Weinstein pairs. Suppose that $\Sigma$ and $\Sigma'$ are split by splitting hypersurfaces $T\subset \Sigma$ and $T'\subset \Sigma'$ as $\Sigma=\Sigma_-\cup\Sigma_+$ and 
 $\Sigma'=\Sigma'_-\cup\Sigma'_+$ and we are given a Weinstein isomorphism $F:\Sigma_+\to\Sigma_-'$. Then the result of 
  the  {\em partial  gluing} is the pair
  $  (\fW,\Sigma)\mathop{\cup}\limits_{F,T,T'}(\fW',\Sigma')$ which consists of the Weinstein domain   $(\fW,\Sigma_-)\mathop{\cup}\limits_F(\fW',\Sigma'_ +)$ together with the Weinstein hypersurface
  $\Sigma_-\mathop{\cup}\limits _{F|_{\So(T)}}\Sigma_+'\subset X\mathop{\cup}\limits_F X'$ which is   the result of gluing the Weinstein pairs $(\Sigma_+,\So(T))$ and $(\Sigma'_-,\So(T'))$ using the  Weinstein isomorphism $F|_{\So(T)}$.

The reverse operation to the partial gluing of Weinstein pairs is a splitting of a Weinstein pair  $(\fW,\Sigma)$, $\fW=(X,\lambda,\phi)$,  along a   splitting hypersurface
$(P,Q:=\p P)\subset (X,\p X)$    for the Weinstein domain $X$     where  in addition  $P$ satisfies the following condition:

\begin{description}
\item{-} $Q$ intersects $\Sigma$ transversely, $Q\cap\Sigma=\So(Q)$ and $Q\cap \Sigma$ is a splitting hypersurface for   $\Sigma$, which splits  it into  $\Sigma_+$ and $\Sigma_-$; 
  \end{description}
 
 The result of this splitting are   two Weinstein pairs 
 $(X_-,\wt\Sigma_-)$ and $(X_+,\wt\Sigma_+)$, where   the Weinstein hypersurface $\wt\Sigma_\pm\subset\p X_\pm=Y_\pm\cup P$ is the result of gluing of Weinstein pairs $(\Sigma_\pm,\So(Q\cap\Sigma))$ and $(\So(P),\So(Q\cap\Sigma))$.
     
     As in the absolute case,
     the gluing operation of Weinstein pairs glues their skeleta along the skeleta of glued hypersurfaces.
Conversely, a  splitting of the skeleton of a Weinstein domain 
lifts to a splitting of a Weinstein domain into two Weinstein pairs.
  
\subsection{Product and Stabilization of Weinstein pairs}\label{sec:prod-stab}
Given two Weinstein pairs $(\fW,\Sigma)$ and $(\fW',\Sigma')$, where
$\fW=(X,\lambda,\phi)$, $\fW'=(X',\lambda',\phi')$ we define their product
 as the Weinstein pair 
 $$(\fW,\Sigma)\times(\fW',\Sigma'):=(X\times X',\lambda\oplus\lambda', (\Sigma\times X';\Sigma\times\Sigma')\mathop{\sqcup}\limits_\Id (X\times\Sigma', \Sigma\times\Sigma')).$$

Here  $(\Sigma\times X';\Sigma\times\Sigma')\mathop{\sqcup}\limits_\Id(X\times\Sigma', \Sigma\times\Sigma'))$ is the result of gluing of two Weinstein pairs by the identity map between the  Weinstein hypersurfaces
$\Sigma\times\Sigma'\subset\p(X\times\Sigma')$ and $\Sigma\times\Sigma'\subset\p(\Sigma\times X').$
We note
that
$$ \Core\left((\fW,\Sigma)\times(\fW',\Sigma')\right)=\Core(\fW,\Sigma)\times\Core(\fW',\Sigma').$$
 
In the case when $(X',\Sigma')$ is the Weinstein pair $(T^*D^k,T^*S^{k-1})$ the product operation is called the {\em stabilization} (or $k$-stabilization). It was first proposed in  a slightly different form by M. Kontsevich, \cite{Konts-stab}. The core of the  $k$-stabilized pair $(\fW,\Sigma)$  is  equal
to $\Core(\fW,\Sigma)\times D^k$.
 
It is important to stress the point that the result of the stabilization  is always a Weinstein pair with a non-empty hypersurface in the boundary, even if we begin with the absolute case of a Weinstein domain.

\subsection{Weinstein homotopy as a Weinstein pair}\label{sec:hom-pair}

  Consider a Weinstein structure $\fW_0:=(X,\om, \lambda_0,\phi_0)$ and its $1$-stabilization 
  $\fW^\st:=\fW\times T^*I$, viewed as a Weinstein pair $(X\times T^*I,\lambda_0+udt, X\times 0\cup X\times 1)$.
 Consider  a  Weinstein homotopy $\fW_t:=(X,\ \lambda_t=\lambda_0+dh_t,\phi_t)$, $t\in[0,1]$. We assume, in addition, that $\dot h_1=\dot h_0=0, $ where we denoted  $\dot h_t:=\frac{d{h_t}}{dt}(t).$ This condition can always be arranged by a re-parameterization of the homotopy.
 Consider  the product  $X\times T^*I$ with the symplectic form $\Omega:=\om\oplus du\wedge dt$ , where $(u,t)$ are canonical coordinates on $T^*I$ (so that $u=0$ defines the $0$-section). Note that the $1$-form  $\wt\lambda:=\lambda_t+(u+\dot h_t)dt$ is  a Liouville form for $\Omega$. Indeed,   $d\wt\lambda:=d\lambda_t +dt\wedge d\dot h_t+d\dot h_t\wedge dt+du\wedge dt=\om+du\wedge dt.$ 
 
 We  have $\wt\lambda|_{X_0}=\lambda_0$ and $\wt\lambda|_{X_1}=\lambda_1$. 
 \begin{proposition}
There  exists a function $\wt\phi=X\times T^*I\to\R$ such that 
 \begin{align*}
 &\left((X\times T^*I,
 \wt\lambda:=\lambda_t+(u+\dot h_t)dt,\wt\phi; X_0\cup X_1\right)\\
 &\hbox{where}\;\; X_0:=X\times\{t=u=0\}, \;;  X_1:=X\times \{t=1, u=0\}  ,
 \end{align*}  
   is 
a  Weinstein  pair.
\end{proposition}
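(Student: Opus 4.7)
The plan is to compute the Liouville field of $\wt\lambda$ explicitly, propose a Lyapunov function modelled on the product structure, and verify the pair conditions face by face. First I would determine $\wt Z$ by solving $\iota_{\wt Z}(\omega + du \wedge dt) = \wt\lambda$: writing $\wt Z = A + b\,\partial_t + c\,\partial_u$ with $A \in TX$ and matching componentwise yields $A = Z_t$ (the Liouville field of $\lambda_t$ on $X$), $b = 0$, and $c = u + \dot h_t$. The crucial structural feature is the vanishing of the $\partial_t$-component: the flow of $\wt Z$ preserves $t$, and in each slice $\{t = t_0\}$ it acts as the Liouville field $Z_{t_0} + (u + \dot h_{t_0})\partial_u$ of a shifted $1$-stabilisation of $\fW_{t_0}$.

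I would then propose
\[
\wt\phi(x, t, u) \;:=\; \phi_t(x) + \tfrac{1}{2}(u + \dot h_t)^2,
\]
after first reparameterising the homotopy in $t$ so that $\dot h_t \equiv 0$ and $\dot\phi_t \equiv 0$ on neighbourhoods of $\{0, 1\}\subset [0,1]$ (this is harmless and preserves the endpoint structures). Condition (L1) follows from the direct calculation $d\wt\phi(\wt Z) = d\phi_t(Z_t) + (u + \dot h_t)^2$, combined with the uniform Lyapunov inequality for the family $(\phi_t, Z_t)$ over the compact parameter interval and the identity $\|\wt Z\|^2 = \|Z_t\|^2 + (u + \dot h_t)^2$ in the product metric. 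For the pair structure, I would truncate $u$ to $[-R, R]$ with $R > \max_{t \in [0,1]} |\dot h_t|$, so that $\wt Z$ is outward transverse on $\partial X \times T^*I$ and on $X \times I \times \{\pm R\}$. On the faces $\{t = 0\}$ and $\{t = 1\}$, the reparameterisation makes $\wt Z = Z_{0,1} + u\,\partial_u$ tangent to the face and $\wt\phi = \phi_{0,1} + \tfrac{1}{2}u^2$ on a neighbourhood, which is exactly the adjusted model of Proposition \ref{prop:pair-adjust} for the contact surroundings of the Weinstein hypersurfaces $X_0 = X \times \{(0,0)\}$ and $X_1 = X \times \{(1,0)\}$, carrying the induced Weinstein structures $\fW_0$ and $\fW_1$.

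For conditions (L2) and (L3), I would observe that the attractor of $-\wt Z$ in each slice is $\Core(\fW_t) \times \{u = -\dot h_t\}$, so the total core of $\wt \fW$ lies on the graph $\Gamma = \{u = -\dot h_t\}$; the pullback of $\wt\lambda$ to $\Gamma$ is simply $\lambda_t$, since the $(u + \dot h_t)dt$ term vanishes on $\Gamma$. Condition (L3) is achieved by a generic $C^\infty$-small perturbation of $\wt\phi$ supported in $t \in (0, 1)$, producing a Morse Lyapunov function whose discrete critical set is contained in the zero-locus of $\wt Z$. Condition (L2), with the Whitney stratification, then follows from the Morse-approximation results of \cite{CE12} together with the slice-by-slice isotropy of the $\Core(\fW_t)$. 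The main obstacle I anticipate is verifying the isotropy of the total core in the "temporal" direction: the pullback $d\wt\lambda|_\Gamma = \omega - d_X \dot h_t \wedge dt$, and its vanishing on a $t$-family of strata $S_t$ reduces to the identity $\omega(\partial_s x, \partial_t x) = d_X\dot h_t(\partial_s x)$ for $\partial_s x \in TS_t$, which is not immediately apparent. I would handle this by invoking the symplectic isotopy $\psi_t$ from Section 1 with $\psi_t^*\lambda_t = \lambda_0 + dH_t$ to parameterise $S_t = \psi_t(S_0)$; the generator $V_t = \dot\psi_t \circ \psi_t^{-1}$ is then Hamiltonian for a function determined by $\dot H_t$ and $\dot h_t$, and the required identity holds on each stratum up to an additive constant that does not affect closedness.
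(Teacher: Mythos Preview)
Your computation of the Liouville field $\wt Z = Z_t + (u+\dot h_t)\,\partial_u$ agrees with the paper, but your verification of (L1) contains a genuine error. You claim
\[
d\wt\phi(\wt Z) \;=\; d\phi_t(Z_t) + (u+\dot h_t)^2,
\]
but this omits a cross term: since $\dot h_t$ depends on $x\in X$, the function $\tfrac12(u+\dot h_t)^2$ contributes $(u+\dot h_t)\,d_X\dot h_t$ to the $X$-component of $d\wt\phi$, and pairing with $Z_t$ gives
\[
d\wt\phi(\wt Z) \;=\; d\phi_t(Z_t) + (u+\dot h_t)\,d\dot h_t(Z_t) + (u+\dot h_t)^2.
\]
The middle term can be negative and is not, in general, dominated by the other two. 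With $X:=\|Z_t\|$, $Y:=u+\dot h_t$ and the uniform bounds $d\phi_t(Z_t)\geq aX^2$, $|d\dot h_t(Z_t)|\leq bX$, one only obtains $d\wt\phi(\wt Z)\geq aX^2 + Y^2 - b\,XY$, and this quadratic form fails to be positive definite once $b^2>4a$. Your choice of coefficient $\tfrac12$ therefore does not work for an arbitrary homotopy.

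The paper's fix is exactly to anticipate this term: it takes $\wt\phi=\phi_t+\tfrac{k}{2}(u+\dot h_t)^2$ with a free constant $k>0$, so that the lower bound becomes $aX^2 + kY^2 - bk\,XY$, which is positive definite provided $k<4a/b^2$. Choosing $k$ small enough then yields (L1). Your discussion of the adjusted pair structure near $t\in\{0,1\}$ and of (L2)--(L3) goes beyond what the paper actually proves (the paper stops after (L1)), and is reasonable in outline, but the Lyapunov inequality itself needs the scaling argument above.
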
 
We call this pair the {\em  concordance   generated by the homotopy $\fW_t$}.
 \begin{proof}  Note that the corresponding to $\wt\lambda$ Liouville vector field is given by the formula   $\wt Z= Z_t+(u+\dot h_t)\frac{\p}{\p u},$  where $Z_t$ is the Liouville vector field corresponding to $\lambda_t$.
 Define the function $\wt\phi$ by the formula
 $\wt\phi=\phi_t+\frac k2(u+\dot h_t)^2$, where a positive constant $k$ will be chosen later.  
 Then we
 have
 \begin{align*}
 d\wt\phi(\wt Z)=d\phi_t(Z_t)+k(u+\dot h_t)^2+k(u+\dot h_t)d\dot h_t(Z_t). 
 \end{align*}
 Not that $|d\phi_t(Z_t)\geq a||Z_t||^2$ and $|d\dot h_t(Z_t)|\leq b||Z_t||$ for some constants $a,b>0$. Denoting $X:=||Z_t||, Y:=u+\dot h_t$ we can write
  \begin{align*}
 &  |d\wt\phi(\wt Z)|\geq   a||Z_t||^2+k(u+\dot h_t)^2-bk|u+\dot h_t| ||Z_t||  \\
 &aX^2+kY^2-bkXY.
  \end{align*}
  The quadratic form  $aX^2+kY^2-bkXY$ is positive definite if $b^2k^2-4ak<0$ or $k<\frac{4a}{b^2}$. Under  this condition, which   can be arranged by choosing the constant $k$ sufficiently small,  
  we get
  $ |d\wt\phi(\wt Z)|\geq c(X^2+Y^2)\geq \wt c||\wt Z||^2$ for  positive constants  $c, \wt c$. This concludes the proof.
 \end{proof}
 \begin{remark}
 The critical point locus of $\wt \phi$ ($=$ the zero locus of $\wt Z$) is equal to
 $$\wt C=\{(x,t,u);\; x\;\;\hbox{is a critical point of}\;\; \phi_t,\;u=\dot h_t(x), t\in[0,1]\}.$$
The stable manifold  of a critical point $(x_0,t_0,u_0)$ projects to the stable manifold of the critical point $x_0$ of $\phi_{t_0}$. Its $u$-coordinate can be found  by solving the inhomogeneous linear ODE $$\frac {du(\gamma(s))}{ds}=u(\gamma(s))+\dot h_{t_0}(\gamma(s))$$ with the asymptotic boundary condition $\mathop{\lim}\limits_{s\to\infty}u(\gamma(s))=u_0$, where $\gamma(s)$ is a trajectory of $X_{t_0}$ converging to the critical point $x_0$.                     
 \end{remark}
  
    \section{Looseness and Flexibility}
Let us recall  that  in   contact manifolds of dimension $2n-1\geq 5$ there is  a local modification construction for  Legendrian submanifolds, called {\em stabilization}\footnote{The term ``stabilization" is used here in a completely different sense than in Section \ref{sec:prod-stab}.} , see \cite{Eli90,M12, CE12}. This operation can be performed  in an arbitrarily small neighborhood of  any point of a Legendrian. Moreover,  it can also be performed without changing the formal Legendrian isotopy class of the Legendrian submanifold.
  In her 2012 paper \cite{M12} Emmy Murphy called a   Legendrian submanifold  {\em loose} if it is isotopic to a stabilization of another Legendrian submanifold, and showed    that loose Legendrians satisfy an $h$-principle: any two loose formally isotopic Legendrians can be connected by a Legendrian isotopy.
 
The notion  of {\em flexibility}, see \cite{CE12}, for Weinstein cobordisms is tightly related to the looseness property of Legendrian knots. One  first defines flexibility for  {\em elementary}
 Weinstein cobordisms, i.e. Weinstein cobordisms    $(W,\om,Z,\phi)$ without any  $Z$-trajectories connecting critical points  of the Lyapunov function $\phi$.
An elementary $2n$-dimensional, 
$n>2$, Weinstein cobordism  
$(W,\om,X,\phi)$ is called 
{\em flexible} 
if the attaching spheres of all
index $n$ handles form in $\p_-W$ a {\it loose} Legendrian link (i.e. each sphere is loose in the complement of the others). 
A Weinstein structure is called {\em flexible} if it is homotopic to one  which can be decomposed into elementary flexible
cobordisms.

 As it was shown by E. Murphy and K. Siegel in \cite{MS15} existence of  a decomposition into flexible elementary cobordisms really depends on the choice of a particular Weinstein structure in the given homotopy class. Moreover,   there exist non-flexible Weinstein domains which  become flexible after attaching an $n$-handle.

Flexible Weinstein structures are indeed flexible: they abide a number of $h$-principles.
\begin{theorem}\label{thm:Weinstein-flexible}
\begin{enumerate}
\item {\rm(\cite{CE12})} Any two flexible Weinstein structures on a given smooth cobordism are homotopic as Weinstein structures provided that the corresponding symplectic forms are in the same homotopy class of non-degenerate (but not necessarily closed) $2$-forms.
\item {\rm(\cite{CE12})}  Let $(X,\om, Z,\phi)$ be any flexible Weinstein structure and $\phi_t$, $t\in[0,1]$, be a   family of generalized Morse functions such that $\phi_0=\phi$. Then there exists a homotopy 
 $(X,\om_t, Z_t,\phi_t)$ of Weinstein structures.
 \item {\rm(\cite{EM-caps})} Let $(X_\pm,\om_\pm, Z_\pm,\phi_\pm )$ be two  Weinstein structures. Suppose that the structure   
 $(X_-,\om_-, Z_-,\phi_- )$ is  flexible and that there exists an embedding
 $f:X_-\to X_+$ such that the forms $\om_-$ and $f^*\om_+$ are homotopic as non-degenerate (but not necessarily closed) $2$-forms. Then there exists a homotopy of Weinstein structures
 $(X_-,\om^t_-, Z^t_-,\phi^t_- )$, $t\in[0,1]$,  beginning  with 
 $(X_-,\om^0_-=\om_-, Z^0_-=Z_-,\phi^0_-=\phi_- )$ and an isotopy $f^t:X_-\to X_+$ beginning with $f^0=f$ such that $(f^1)^*\om_+=\om_-^1$.
\end{enumerate}
\end{theorem}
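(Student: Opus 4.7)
The plan is to reduce all three statements to E.~Murphy's $h$-principle for loose Legendrian embeddings, plus its parametric and relative versions, by passing to a Weinstein handle decomposition and analyzing one critical handle at a time.

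\emph{Step 1: Self-indexing normal form.}  For a flexible Weinstein structure $\fW=(X,\om,Z,\phi)$ we first use condition (L3) and a standard Morse-theoretic rearrangement to assume $\phi$ is self-indexing and Morse.  The flexibility hypothesis yields a decomposition of $X$ into elementary Weinstein cobordisms $W_1,\dots,W_N$, where each $W_k$ carries only critical points of a single index $i_k$, and whenever $i_k=n$ the attaching Legendrian link of $\p_-W_k$ is loose in the complement of the others.  Since subcritical Weinstein cobordisms already satisfy a full $h$-principle (after Cieliebak, going back to Eliashberg--Gromov), only the critical handle attachments carry genuine information.

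\emph{Step 2: Proof of (1).}  Let $\fW^0,\fW^1$ be two flexible structures with the hypothesized homotopic non-degenerate $2$-forms $\om_0,\om_1$.  First, using a parametric Gromov $h$-principle for formal Weinstein structures together with the homotopy of $\om_t$, connect them through a family of \emph{almost Weinstein} structures and hence through a family of (formal) Weinstein handle decompositions.  Next, bring both decompositions to a common smooth handle decomposition by formal Cerf theory; each elementary subcritical piece is then rigidified by the subcritical $h$-principle, while each index~$n$ elementary piece reduces to comparing two loose Legendrian attaching links that are formally isotopic in a fixed contact manifold.  Murphy's $h$-principle provides an actual Legendrian isotopy, which lifts to a Weinstein homotopy of the handle attachment.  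Stacking these homotopies level by level produces the desired global Weinstein homotopy.

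\emph{Step 3: Proof of (2).}  The input family $\phi_t$ of generalized Morse functions prescribes a moving handle decomposition that undergoes only birth/death transitions.  At each moment $t$ the attaching data is again a Legendrian link in a subcritical boundary, and one needs to carry looseness through the family.  This is handled by: (i) choosing the family so that in a neighborhood of each death-birth the added handle is introduced in its standard loose model (the loose chart can be prescribed in an arbitrarily small set), and (ii) using the parametric form of Murphy's $h$-principle to propagate looseness under isotopy.  A parametric version of the handle-by-handle construction of Step~2 then produces the family $(X,\om_t,Z_t,\phi_t)$.

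\emph{Step 4: Proof of (3).}  Given the formal embedding $f:X_-\into X_+$ with $\om_-\simeq f^*\om_+$ as non-degenerate $2$-forms, one studies the ``cap'' $W=X_+\setminus f(\Int X_-)$ as a Weinstein cobordism from $\p_+X_-$ (pushed in by $f$) to $\p_+X_+$.  Flexibility of $\fW_-$ allows the attaching Legendrians of each critical handle to be kept loose inside the moving boundary as $f$ is deformed, so a relative version of (1) applied to the pair $(W, \p_-W)$ produces the required homotopy $(\om^t_-,Z^t_-,\phi^t_-)$ together with the isotopy $f^t$ realizing $(f^1)^*\om_+=\om^1_-$.

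\emph{Principal obstacle.}  The hard step is the parametric and relative $h$-principle for loose Legendrians, especially the bookkeeping that ensures looseness is preserved through death-birth transitions in (2) and through the moving embedding in (3); everything else is handle-theoretic packaging on top of Murphy's theorem and the subcritical $h$-principle.
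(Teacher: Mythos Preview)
The paper does not contain a proof of this theorem; it is stated with attributions to \cite{CE12} (for parts (i) and (ii)) and \cite{EM-caps} (for part (iii)) and then used as input for the subsequent discussion. So there is no in-paper argument to compare your sketch against.

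That said, your outline for (i) and (ii) is a reasonable summary of the strategy in \cite{CE12}: reduce to elementary cobordisms, use the subcritical $h$-principle for indices $<n$, and invoke Murphy's loose Legendrian $h$-principle (in its parametric/relative form) for the index-$n$ attaching links. The ``principal obstacle'' you identify is exactly right.

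Your Step~4, however, has a genuine gap. You propose to study the complement $W=X_+\setminus f(\Int X_-)$ as a Weinstein cobordism and apply a relative version of (i). But at the outset $f$ is only a \emph{smooth} embedding with $\om_-$ merely formally homotopic to $f^*\om_+$; there is no reason for $W$ to carry a Weinstein (or even Liouville) cobordism structure with $\p_-W=f(\p X_-)$, so the relative argument cannot start. The actual proof in \cite{EM-caps} proceeds in the opposite direction: one builds the symplectic embedding handle by handle from the inside out. Subcritical handles are embedded via the $h$-principle for isotropic embeddings. For each critical handle one must embed its Lagrangian core disc into $(X_+,\om_+)$ with prescribed (loose) Legendrian boundary on the already-embedded subcritical part; this is precisely where the ``Lagrangian caps'' result of \cite{EM-caps} enters, providing an $h$-principle for Lagrangian discs with loose negative Legendrian end. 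Without that ingredient the critical step of (iii) does not go through.
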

   At first glance Theorem  \ref{thm:Weinstein-flexible} implies that symplectic topology of flexible Weinstein manifolds is quite boring. This is also confirmed by the fact that symplectic homology  in all its flavors  of a flexible Weinstein manifold is trivial. However, as we will see below in Section \ref{sec:filing} the contact boundaries of flexible Weinstein domains have a rich contact topology.

 The looseness property of  a Legendrian submanifold can be naturally extended  to  Weinstein hypersurfaces of contact manifolds. A Weinstein hypersurface $\Sigma $ of a contact manifold $Y$ of dimension $2n+1\geq 5$ is called
 {\em loose} if   for each    $n$-dimensional strata  $S$ of the skeleton $\Core(\Sigma)$ there is a ball $B_S\subset Y\setminus (\Core(\Sigma)\setminus S)$   such that $B_S\cap S$ is loose in $B_S$ relative $\p(B_S\cap S)$.
   A canonical Weinstein thickening of a loose Legendrian knot is loose. However, it is unclear whether looseness is preserved under   Weinstein isotopy.
 
 \begin{problem}  Is looseness property preserved under a Weinstein isotopy of $\Sigma$. In particular,  suppose that a Weinstein thickening  $\Sigma(\Lambda)$ of a Legendrian knot $\Lambda$ is isotopic in the class of Weinstein hypersurfaces to a loose Weinstein hypersurface.  Does this imply that $\Lambda$ itself   is  loose?
 \end{problem}
 \begin{prop} 
 Let $(\fW,\Sigma)$ and $(\fW',\Sigma')$ where 
$\fW=(X,\lambda,\phi)$, $\fW'=(X',\lambda',\phi')$,  be two  Weinstein pairs and  $$(\fW,\Sigma)\times(\fW',\Sigma'):=(X\times X',\lambda\oplus\lambda', \wt\Sigma:=(\Sigma\times X';\Sigma\times\Sigma')\mathop{\sqcup}\limits_\Id (X\times\Sigma', \Sigma\times\Sigma'))$$  be their product. 
Suppose that  $\Sigma$ is loose in $\p X$. Then $ \wt\Sigma $ is  loose in  $\p (X\times X')$.
 \end{prop}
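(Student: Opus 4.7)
The plan is to verify the definition of looseness for $\wt\Sigma$ by exhibiting, for every top $(n{+}n'{-}1)$-dimensional stratum $\wt S$ of $\Core(\wt\Sigma)$, a loose ball $B_{\wt S}\subset\p(X\x X')$. Combining the gluing formula for Weinstein pairs from Section~\ref{sec:gluing-splitting} with the product formula for cores from Section~\ref{sec:prod-stab} yields
\[
\Core(\wt\Sigma)=\bigl[\Core(\Sigma)\x\Core(X',\Sigma')\bigr]\cup\bigl[\Core(X,\Sigma)\x\Core(\Sigma')\bigr],
\]
identified along $\Core(\Sigma)\x\Core(\Sigma')$. The top strata split into two families: (A) $S\x T$ with $S$ a top stratum of $\Core(\Sigma)$ and $T$ a top cell of $\Core(X',\Sigma')$; and (B) $T'\x S'$ with $T'$ top in $\Core(X,\Sigma)$ and $S'$ top in $\Core(\Sigma')$. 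The mixed pieces $S\x\wh{S'}$ and $\wh S\x S'$ join across $S\x S'\subset\Sigma\x\Sigma'$ into a single connected stratum.

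For each $\wt S$ that contains a piece of the form $S\x(\cdot)$ with $S$ a top stratum of $\Core(\Sigma)$, I would construct $B_{\wt S}$ as a product $B_S\x B'$, where $B_S\subset\p X$ is the loose ball furnished by the hypothesis on $\Sigma$ and $B'$ is a small Darboux ball in $X'$ or in $\p X'$ centered at a generic interior point of the other factor. Away from the corner $\p X\x\p X'$ of $\p(X\x X')$ (after an appropriate rounding), the product $B_S\x B'$ embeds in $\p(X\x X')$ as a standard contact Darboux ball with Liouville form $\lambda|_{B_S}+\lambda'|_{B'}$. The intersection $\wt S\cap(B_S\x B')$ is then the product of the loose Legendrian $B_S\cap S$ with a Lagrangian (or Legendrian) disk coming from the other factor; by the classical stability of loose charts under such products (as in \cite{M12,CE12}), the zig-zag model witnessing looseness of $B_S\cap S$ in $B_S$ persists as a zig-zag for the product Legendrian, supplying the required loose chart.

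The hard part will be to handle strata of kind (B) of the form $D\x S'$, where $D$ is a top Lagrangian cell of $\Core(X)$ coming from an interior critical point of $\phi$ whose stable manifold does not meet $\Sigma$: for such $\wt S$ the loose ball of $\Sigma$ is not directly visible at a generic point of $D\x S'$. To resolve this, I would perform a preliminary Weinstein homotopy of the pair $(X,\Sigma)$, keeping $\Sigma$ fixed pointwise and preserving the Weinstein-pair homotopy class, that uses handle slides in the spirit of \cite{CE12} to drag each such interior top cell onto the saturation $\wh{\Core(\Sigma)}$. After this normalization, every top stratum of $\Core(\wt\Sigma)$ contains a piece of the first kind, so the product construction of the preceding paragraph supplies the loose ball for it, completing the verification that $\wt\Sigma$ is loose in $\p(X\x X')$.
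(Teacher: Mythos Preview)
For the strata of type~(A) and the merged mixed strata your argument coincides with the paper's: the paper's entire proof is the single observation that the proposition follows from the fact that for a loose Legendrian $\Lambda\subset(Y,\{\alpha=0\})$ and an exact Lagrangian $L\subset(U,\mu)$ the product $\Lambda\times L$ is loose in $(Y\times U,\{\alpha\oplus\mu=0\})$. The paper does not write out the stratification of $\Core(\wt\Sigma)$ and offers no separate discussion of your case~(B); it simply invokes the product fact.

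Your proposed treatment of the type~(B) strata $D\times S'$, with $D$ an interior index-$n$ cell of $\Core(X)$ disjoint from $\wh{\Core(\Sigma)}$, is where your argument breaks down. You propose a preliminary Weinstein homotopy of $(X,\Sigma)$, fixed on $\Sigma$, that drags every such $D$ onto the saturation $\wh{\Core(\Sigma)}$. Two problems arise. First, such a homotopy need not exist: take $X=T^*S^n$ with $\Sigma$ the thickening of a single loose Legendrian sphere; the zero section $S^n$ carries the generator of $H_n(X)$ and cannot be absorbed by handle slides into the contractible cone $\wh{\Core(\Sigma)}$. Second, and more decisively, any Weinstein homotopy of $X$ alters the Liouville form on $X\times X'$ and hence the contact structure on $\p(X\times X')$; the hypersurface $\wt\Sigma$ is then carried to a Weinstein-isotopic hypersurface with a new skeleton. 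To transport looseness back to the original $\wt\Sigma$ you would need looseness to be preserved under Weinstein isotopy of hypersurfaces, which is exactly the open Problem the paper poses immediately before this proposition---so it cannot be used as an ingredient here. In short, you have correctly isolated a case the paper's one-line proof does not explicitly address, but your handle-slide resolution does not close it.
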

Indeed, this is straightforward from the following fact:  given any contact manifold $(Y,\{\alpha=0\})$,  a Liouville manifold $(U,\mu)$,  a loose Legendrian $\Lambda\subset  Y$ and
a Lagrangian $L\subset  U$ with $\mu|_L=0$, then the Legendrian $\Lambda\times
L\subset  (Y\times U,\{\alpha\oplus\mu=0\})$ is loose as well. 
 
Let us stress the point that while flexibility of a Weinstein manifold is its intrinsic property, the looseness  of a Weinstein hypersurface depends on its embedding in the contact manifold.
However,  the above fact about the looseness of a product shows  that flexibility always implies looseness (I thank the referee for this argiment).
 \begin{prop}
 Let $(Y,\xi)$ be a contact manifold of dimension $\geq 7$, and $\Sigma\subset Y$ a flexible  Weinstein hypersurface. Then $\Sigma$ is loose.
 \end{prop}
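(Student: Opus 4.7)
The plan is to work near a top-dimensional stratum $S$ of $\Core(\Sigma)$ and exhibit a loose chart in a ball $B_S\subset Y$ by combining a flexible handle model on $\Sigma$ with the product-looseness principle already invoked in the proof of the preceding proposition.

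First, after a Weinstein homotopy inside the flexibility class we may assume that $\Sigma$ admits an elementary flexible handle decomposition $\Sigma=\Sigma_{\mathrm{sc}}\cup H_1\cup\cdots\cup H_k$, where $\Sigma_{\mathrm{sc}}$ is subcritical and the top handles $H_i$ (all of index $n$) are attached along a loose Legendrian link $\Lambda=\Lambda_1\sqcup\cdots\sqcup\Lambda_k\subset\p\Sigma_{\mathrm{sc}}$; the top-dimensional strata of $\Core(\Sigma)$ are precisely the core disks $S_i$ of the $H_i$.

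Fix one such stratum $S=S_i$ with attaching sphere $\Lambda=\Lambda_i$ and a regular point $p\in\Lambda$ chosen generically so that a small neighborhood of $p$ in $Y$ misses $\Core(\Sigma_{\mathrm{sc}})$ and the other $S_j$. By the contact neighborhood theorem a tubular neighborhood of $\Sigma$ in $Y$ is contactomorphic to an open subset of the contactization $(\Sigma\times\R,\,\lambda_\Sigma+du)$, with $\Sigma$ the slice $\{u=0\}$ and $S$ a Legendrian in $Y$. The standard Weinstein handle model identifies a neighborhood of $\Lambda$ inside $\Sigma$ with a piece of $T^*\Lambda\times[0,\epsilon)$ carrying the standard handle Liouville form, so that $\p\Sigma_{\mathrm{sc}}$ corresponds to $J^1\Lambda$ and $S$ corresponds to the core $\Lambda\times[0,\epsilon)$. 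Composing these two identifications exhibits a neighborhood of $p$ in $Y$ as an open set in the product $(J^1\Lambda)\times T^*[0,\epsilon)$ of a contact manifold with a Liouville manifold, in which $S\cap B$ becomes the product $\Lambda\times L$ of the Legendrian $\Lambda$ with a Lagrangian arc $L\subset T^*[0,\epsilon)$.

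Now I would apply the same product fact that underlies the preceding proposition: the product of a loose Legendrian with an exact Lagrangian in a Liouville manifold is a loose Legendrian. Since $\Lambda$ is loose in $\p\Sigma_{\mathrm{sc}}$ relative to the rest of the attaching link, $\Lambda\times L$ contains a loose chart inside the product model, hence inside $Y$. Taking $B_S$ to be a Darboux ball in $Y$ containing this loose chart, and chosen by genericity of $p$ to lie in $Y\setminus(\Core(\Sigma)\setminus S)$, yields the required ball. Running the argument for each $i=1,\dots,k$ completes the verification that $\Sigma$ is loose.

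The only nontrivial step is the one in the second paragraph: one must arrange the contact neighborhood identification of $\Sigma\subset Y$ and the Weinstein handle model on $\Sigma$ so that they compose into a genuine contact$\times$Liouville product in which $S$ appears as a product Legendrian $\Lambda\times L$. Once this identification is in place, the conclusion is an immediate application of the product-looseness fact.
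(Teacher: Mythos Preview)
Your overall strategy matches the paper's: identify a neighborhood in $Y$ near the core disc $S$ as a product of a contact manifold containing the loose attaching sphere $\Lambda$ with a two-dimensional Liouville piece, so that $S$ appears as $\Lambda\times L$, and then invoke the product-looseness fact. However, the specific product you write down does not support the last step.

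You take the contact factor to be $J^1\Lambda$, with $\Lambda$ appearing as the zero section. But the zero section of $J^1\Lambda$ is never loose; looseness of $\Lambda$ is a feature of its embedding in $\partial\Sigma_{\mathrm{sc}}$, and a loose chart for $\Lambda$ cannot be squeezed into the standard Legendrian neighborhood (if it could, the zero section of $J^1 S^{m-1}$ would be loose, contradicting, e.g., its nontrivial Legendrian contact homology). Hence the hypothesis of the product-looseness fact fails for $\Lambda\times L\subset J^1\Lambda\times T^*[0,\epsilon)$, and the argument stalls precisely at the step you flag as nontrivial. (There is also a dimension mismatch in your handle model $T^*\Lambda\times[0,\epsilon)$, but this is secondary.)

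The paper avoids this by taking the contact factor to be all of $\partial\Sigma_0$ rather than a standard neighborhood of $\Lambda$. Concretely: on the collar of $\partial\Sigma_0$ in $\Sigma$ the Liouville form is $s\beta$ for a contact form $\beta$ on $\partial\Sigma_0$, and the ambient contact form on $Y$ near this collar is $dt+s\beta$. Rewriting $dt+s\beta=s(u\,dt+\beta)$ with $u=1/s$ exhibits the neighborhood (up to a conformal factor) as the product of the contact manifold $(\partial\Sigma_0,\beta)$ with a Liouville domain $Q\subset(\R^2,u\,dt)$, in which the core disc becomes $\Lambda\times\{\text{arc}\}$. Since $\Lambda$ \emph{is} loose in $\partial\Sigma_0$, the product-looseness fact now applies directly. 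Replacing your $J^1\Lambda$ by $\partial\Sigma_{\mathrm{sc}}$ (or at least by an open set in it large enough to contain the loose chart) repairs your argument and makes it coincide with the paper's.
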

 Indeed,   let $\alpha$ be a contact form for $\xi$ which restricts to a Liouville form $\mu$ on $\Sigma$. Consider a Weinstein  subdomain $\Sigma_0\subset\Sigma$ and let  a Lagrangian disc $\Delta\subset \Sigma\setminus\Sigma_0$ be attached to $\Sigma_0$ along a loose Legendrian sphere $\Lambda:=\p\Delta\subset\p\Sigma_0$. In a neighborhood   $U\supset\p\Sigma_0$ in $\Sigma$ the Liouville form $\mu$   can be written as $s\beta$, $s\in(1-\eps, 1+\eps)$ for a contact form on $\p\Sigma_0$, and on a neighborhood $\wt U$ of $U$ in $Y$ the contact form $\alpha$ can be written as $dt+s\beta=s(udt+\beta)$, $|t|<\eps, u=\frac1s$. 
 Hence, $\wt U$ can be viewed as the product of the contact manifold  $(\p\Sigma_0,\beta)$ and a Liouville subdomain  $$ Q:=\{(u,t)\in\left(-\frac 1{1+\eps},\frac1{1-\eps}\right)\times(-\eps,\eps)\}\subset (\R^2,udt),$$ while $\Delta\cap \wt U=\Lambda\times\{t=0, \frac1{1-\eps}<u\leq 1] \}\subset \Sigma_0\times Q$. Hence looseness of  attaching spheres of  top index Weinstein handles of $\Sigma$ implies looseness of their Lagrangian cores viewed as Legendrian submanifolds of $Y$.
 \smallskip
 
 The notion of flexibility naturally extends to Weinstein pairs.
 A Weinstein pair $(\fW=(X,\lambda,\phi),\Sigma)$ is called {\em flexible}  if it is flexible viewed as a cobordism between $\p X_-=U(\Sigma)$ and $\p_+X=X\setminus\Int U(\Sigma)$, see Remark \ref{rem:neg-pos}.
 It is straightforward to see that flexibility is preserved under the stabilization construction. However, the converse is not clear.
 \begin{problem}\label{prob:destab-flex}
 Suppose that the stabilization of a Weinstein pair is flexible. Does this imply that the Weinstein pair itself is flexible? More generally, does existence of a homotopy between stabilizations of two Weinstein (pair) structures implies existence of a homotopy between the structures themselves?
 \end{problem}
 
 Attaching a critical handle along a loose Legendrian knot  to a flexible Weinstein domain by definition preserves its flexibility. This generalizes to the following
 \begin{prop}\label{prop:flex-gluing}
 Let $(\fW=(X,\lambda,\phi),\Sigma)$ and $(\fW'=(X',\lambda',\phi'),\Sigma')$ be  two  Weinstein pairs. Let 
 $\Sigma$ and $\Sigma'$  be decomposed as $\Sigma=\Sigma_-\cup\Sigma_+, \Sigma'=\Sigma'_-\cup\Sigma'_+$ by splitting hypersurfaces $T\subset\Sigma$ and $T'\subset\Sigma'$, see Section \ref{sec:gluing-splitting}. Suppose that 
 \begin{description}
 \item{-} there exists a Weinstein isomorphism $F:\Sigma_+\to\Sigma'_-$,  \item{-}
  $\Sigma_-$ is loose in $\p X$ and 
  \item{-} pairs $(\fW,\Sigma_-)$ and $(\fW',\Sigma_+')$ are flexible.
  \end{description}
   Then the glued pair   $  (\fW,\Sigma)\mathop{\cup}\limits_{F,T,T'}(\fW',\Sigma')$  is flexible. In particular, the result of gluing of two flexible Weinstein domains along Weinstein hypersurfaces  one of which is loose is flexible.
  \end{prop}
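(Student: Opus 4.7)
The plan is to exhibit a Weinstein handle decomposition of the glued pair $(\fW,\Sigma)\mathop{\cup}\limits_{F,T,T'}(\fW',\Sigma')$ in which every critical ($n$-index) attaching Legendrian is loose. Using the description at the end of Section \ref{sec:gluing-splitting}, the partial gluing can be assembled in three stages: (a) build the flexible pair $(\fW,\Sigma_-)$ as a Weinstein cobordism from $U(\Sigma_-)$ to its positive boundary; (b) attach the handles implementing the gluing $F\colon\Sigma_+\to\Sigma'_-$, in which each index $k$ handle of a chosen Weinstein decomposition of $\Sigma_+$ contributes an index $k+1$ handle to the ambient cobordism; (c) attach the flexible pair $(\fW',\Sigma'_+)$ on top, along the remaining portion of the contact boundary.

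Stages (a) and (c) are immediately under control: by hypothesis each admits a decomposition into elementary flexible cobordisms, so all of their critical handles are attached along loose Legendrian links, and these attaching spheres sit in collars that can be arranged disjoint from the other two stages. The delicate point is stage (b). A critical handle of stage (b) corresponds to a top-index $(n-1)$-handle of $\Sigma_+$; it attaches along a Legendrian $n$-sphere built from the Legendrian $(n-1)$-attaching sphere $\lambda$ of that handle by extending $\lambda$ across the gluing collar $U(\Sigma_+)$ via a short arc to its $F$-image in $\Sigma'_-$, exactly as in the handle-decomposition description of a gluing given in Section \ref{sec:gluing-splitting}.

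I would then show that each stage-(b) attaching Legendrian can be made loose by exploiting the looseness of $\Sigma_-$. Since $\Sigma_-$ is loose in $\p X$, every top-dimensional stratum of $\Core(\Sigma_-)$ sits inside a loose chart of the ambient contact boundary. By a Legendrian isotopy in the complement of the previously attached critical spheres, the extending arc can be re-routed so that the attaching Legendrian of the stage-(b) handle acquires a loose arc crossing such a chart; Murphy's looseness criterion then ensures the full Legendrian is loose. The main obstacle is arranging the attachment order and the reservoir of loose charts so that every successive stage-(b) $n$-handle still has an accessible loose chart of $\Sigma_-$ and enough room to perform the re-routing disjointly from the already-attached critical Legendrians. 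This is handled by performing all subcritical handle attachments first, then interleaving the critical attachments in an order that preserves a nonempty loose region of $\Sigma_-$ at each step; after generic perturbation $\Sigma_-$ affords arbitrarily many disjoint loose charts, so the supply never runs out. The ``in particular'' assertion is the special case in which there is no residual pair structure on either side, and then the loose hypersurface one is gluing along itself plays the role of $\Sigma_-$ in the argument above.
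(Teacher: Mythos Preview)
Your overall strategy---decompose the gluing into a sequence of handle attachments and verify that every critical attaching Legendrian is loose---is exactly the paper's approach; the paper's proof is the single sentence ``This follows from the fact that the gluing operations of two Weinstein pairs can be decomposed into a sequence of handle attachments, and the looseness assumption for the Weinstein hypersurface in one of the glued parts implies that all the critical handles are attached along loose knots.''

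Where your write-up diverges is in the mechanism you propose for stage~(b). The loose charts you invoke are balls $B_S\subset\p X$ meeting a top stratum $S\subset\Core(\Sigma_-)$ in a stabilized Legendrian arc; but $S\subset\Sigma_-\subset U(\Sigma_-)$, and $U(\Sigma_-)$ is precisely the \emph{negative} boundary of the cobordism you are assembling. The attaching Legendrian of a stage-(b) critical handle lives in the \emph{positive} contact level at the moment of attachment, and a Legendrian isotopy there cannot cross into $U(\Sigma_-)$ to pick up the zig-zag carried by $S$. Merely passing through $B_S\setminus U(\Sigma_-)$ does nothing: looseness demands that the Legendrian itself exhibit the stabilization model, not that it be near some other loose Legendrian. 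The paper's (terser) mechanism is more direct: an index~$n$ gluing handle arises from a top-index handle of the \emph{glued} hypersurface, and its attaching sphere contains the Legendrian core of that handle as an open piece; looseness of the glued hypersurface in the ambient contact boundary is what makes that core---and hence the full attaching sphere---loose. Your final sentence about the ``in particular'' case in fact invokes this correct mechanism, which is in tension with your stage-(b) argument. (A small slip: the attaching sphere of an index~$n$ handle is an $(n{-}1)$-sphere, not an $n$-sphere.)
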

  This follows from the fact that the gluing operations of two Weinstein pairs can be decomposed into a sequence of handle attachments, and the looseness assumption  for the Weinstein hypersurface in one of the glued parts implies that all the critical handles are attached along loose knots.
  
  As a corollary Proposition \ref{prop:flex-gluing} implies the following
  generalization of the following result of   E. Murphy and K. Siegel, \cite{MS15}:
 \begin{prop}\label{prop:prod-flex}
 The product of two Weinstein pairs, one of which is flexible, is flexible.
 \end{prop}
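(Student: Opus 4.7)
The plan is to extend the Murphy--Siegel argument~\cite{MS15} for products of Weinstein domains to the setting of Weinstein pairs, using the product-looseness statement cited just before Proposition~\ref{prop:prod-flex} together with the gluing/splitting calculus of Section~\ref{sec:gluing-splitting}.

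First I would fix a flexible Weinstein handle decomposition of the flexible pair $(\fW,\Sigma)$, regarded as a sutured Weinstein cobordism from $U(\Sigma)$ to $\partial X\setminus U(\Sigma)$: by flexibility every top-index critical handle is attached along a loose Legendrian in the appropriate intermediate contact boundary. Next I would pass to the cornered representatives of both pairs (Remark~\ref{rem:neg-pos} and Proposition~\ref{prop:pair-adjust}) and form the product sutured Weinstein cobordism with underlying domain $X\times X'$ and distinguished hypersurface $\wt\Sigma=(\Sigma\times X')\cup_{\Sigma\times\Sigma'}(X\times\Sigma')$, smoothing the product corners so as to produce a genuine sutured Weinstein cobordism structure.

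The key structural claim is that this product cobordism admits a handle decomposition in which every top-index critical handle arises from a product $h\times h'$ of a top-index handle $h$ of $(\fW,\Sigma)$ with some handle $h'$ of $(\fW',\Sigma')$. The attaching Legendrian sphere of $h\times h'$ is then built, at the handle-model level, out of the loose attaching Legendrian of $h$ and a Lagrangian disc coming from the core or cocore of $h'$. The product-looseness fact quoted just before this proposition --- namely that $\Lambda\times L$ is loose whenever $\Lambda$ is a loose Legendrian and $L$ is an exact Lagrangian in a Liouville factor --- then ensures that every such product attaching sphere is itself loose, giving flexibility of the product pair.

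The main obstacle is the handle-model calculation: verifying that the attaching spheres of the top-index critical handles of the product sutured cobordism really have the claimed product form, up to Weinstein homotopy. This requires choosing compatible Weinstein handle models on the two sides, using the adjusted Liouville structure of Proposition~\ref{prop:pair-adjust} so that near $\wt\Sigma$ the Liouville field is tangent to the product boundary, and then smoothing corners while preserving the Lyapunov property. This is exactly the technical content already handled by Murphy and Siegel in the absolute case of Weinstein domains; the only additional bookkeeping in the relative case of pairs is to verify, via the partial-gluing formalism of Section~\ref{sec:gluing-splitting}, that the hypersurface contribution $(\Sigma\times X')\cup(X\times\Sigma')$ to $\wt\Sigma$ is correctly accounted for at each stage. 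No genuinely new difficulty arises beyond that bookkeeping.
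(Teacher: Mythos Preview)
Your approach is sound and follows the Murphy--Siegel handle-product argument directly, but it is not the route the paper takes. The paper derives Proposition~\ref{prop:prod-flex} as an immediate corollary of Proposition~\ref{prop:flex-gluing}: one observes that the product $(\fW,\Sigma)\times(\fW',\Sigma')$ can be built inductively by choosing a handle decomposition of the second factor and, at each stage, gluing on a $k$-stabilization $(\fW,\Sigma)\times(T^*D^k,T^*S^{k-1})$ of the flexible first pair. Since stabilization preserves flexibility and Proposition~\ref{prop:flex-gluing} guarantees that each such gluing preserves flexibility (the required looseness hypothesis being supplied by the earlier product-looseness proposition), the result follows in one line without ever opening up the handle models.

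Your direct route---analysing the product handle decomposition and showing that each top-index attaching sphere inherits a loose chart from the flexible factor---is precisely the content of the Murphy--Siegel argument extended to pairs, and it works. One small imprecision: a top-index handle of the product has index $n+n'$, so it must come from a top-index handle on \emph{each} side, not just on the flexible side; this does not affect the conclusion, since the loose chart still comes from the $h$-factor. The trade-off between the two approaches is that the paper's route packages the ``main obstacle'' you identify (the attaching-sphere calculation after corner-smoothing) into the already-proven Proposition~\ref{prop:flex-gluing}, so nothing needs to be reproved; your route is more self-contained but effectively redoes that calculation in the product setting.
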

 Indeed,  the product of  two Weinstein pairs can   always be  built by  a sequence of gluing of various stabilizations of the first pair.
   \section{Lagrangian submanifolds of Weinstein domains}
   In this section we discuss {\em exact} Lagrangian submanifolds in 
  a Weinstein domain $(X, \lambda,\phi)$. The Lagrangians will always be assumed either closed or with Legendrian  boundary in $\p L\subset \p X$. 
  
Let $\Sigma(\p L)$ be the Weinstein thickening of the (possibly empty) Legendrian boundary $\p L$.
A Lagrangian $L$ is called {\em regular}, see \cite{EGL-flex}, if the Weinstein pair $(X,\Sigma(\p L))$ admits a skeleton which contains $L$.

\begin{problem} Are there non-regular exact Lagrangians?
\end{problem}

The  problem is widely open. While no   examples of non-regular  Lagrangians are known, in the opposite direction  in the case of a closed exact Lagrangian $L$ in a general Weinstein domain $X$  it is   even unknown whether $L$ realizes a non-zero homology class in $H_n(X)$ (which is a necessary condition for its regularity).
 
 If $L\subset X$ is regular then by removing its tubular neighborhood $N(L)$ one gets a Weinstein cobordism  $X_L:=(W\setminus N(L),\p_-X_L:=\overline{\p N(L)\setminus\p X},\p_+X_L:=\overline{\p X\setminus N(L)})$ (between manifolds with boundary if $\p L\neq \varnothing$) whose negative boundary is the unit cotangent bundle of $L$.
 The Lagrangian $L$ is called {\em flexible},  see \cite{EGL-flex}), if the cobordism $X_L$ is flexible.
 
It was shown in \cite{EGL-flex} that  any flexible  $(X,\lambda)$  admits  a surprising abundance of flexible Lagrangians with non-empty Legendrian boundary. In particular,

\begin{theorem}\label{cor-3-manifold} 
Let $L$ be an  $n$-manifold with
non-empty boundary, equipped with a fixed trivialization $\eta$ of its
complexified tangent bundle $TL\otimes\C$. 
Then there exists a flexible Lagrangian embedding with Legendrian boundary
$(L,\p L)\to (B^{2n},\p B^{2n})$ where $B^{2n}$ is the standard symplectic
$2n$-ball,  realizing the trivialization $\eta$. In particular, any 
  $3$-manifold with boundary can be realized as a flexible Lagrangian submanifold of $B^6$ with Legendrian boundary in $\p B^6$. 
\end{theorem}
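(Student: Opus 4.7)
The plan is to construct a flexible Weinstein domain $X_L$ which contains $L$ as a regular Lagrangian, and then identify $X_L$ with the standard Weinstein ball $B^{2n}$ via the flexibility h-principle (Theorem \ref{thm:Weinstein-flexible}).

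\emph{Step 1: build $L$ as a regular skeleton.} I would fix a handle decomposition of $L$ using handles of index $\leq n-1$, which is possible since $\p L \neq \varnothing$. In parallel I would build a Weinstein domain $X_0$ as follows: for each $0$-handle $D^n$ of $L$ take a Weinstein $0$-handle $B^{2n}$ containing $D^n$ as the standard Lagrangian disk, and for each $k$-handle of $L$ attached along $S^{k-1}\subset\p L_{(i-1)}$, attach a Weinstein $k$-handle to $X_0^{(i-1)}$ along the isotropic sphere which lifts $S^{k-1}$ via the Lagrangian neighborhood theorem. The core disk of each Weinstein handle then coincides with the core of the corresponding $L$-handle, so $L$ is built up as the skeleton of the Weinstein pair $(X_0,\Sigma(\p L))$. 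In particular $L$ is regular in $X_0$, and since every Weinstein handle of $X_0$ has index $\leq n-1$, the domain $X_0$ is subcritical and therefore automatically flexible.

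\emph{Step 2: cancel topology to reach the ball.} The domain $X_0$ has the homotopy type of $L$, so in general it is not diffeomorphic to $B^{2n}$. To remedy this I would attach a cancelling Weinstein $(k+1)$-handle for each $k$-handle of $X_0$ (with extra $1$-handles to connect multiple $0$-handles if $L$ is disconnected), along a Legendrian sphere in the current contact boundary that meets the cocore sphere of the corresponding $k$-handle transversely in a single point. By Murphy's h-principle \cite{M12}, which applies because $n\geq 3$, each such attaching Legendrian can be chosen to be loose in the complement of $L$ and of the previously attached handles, without changing its formal Legendrian class. The resulting Weinstein domain $X_L$ is diffeomorphic to $B^{2n}$ by handle cancellation, and every critical Weinstein handle of $X_L$ lies in $X_L\setminus N(L)$ and is attached along a loose Legendrian. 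Consequently $X_L\setminus N(L)$ is a flexible Weinstein cobordism and $L\subset X_L$ is flexible and regular.

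\emph{Step 3: identify $X_L$ with $B^{2n}$.} The trivialization $\eta$ of $TL\otimes\C$ together with the standard framings of the Weinstein handles trivializes $TX_L$ as a complex vector bundle, and this trivialization is homotopic to the obvious one on $TB^{2n}$. Since both $X_L$ and $B^{2n}$ are flexible Weinstein domains whose underlying almost symplectic structures are formally homotopic, Theorem \ref{thm:Weinstein-flexible}(1) promotes this to an actual Weinstein isomorphism $\Phi:X_L\to B^{2n}$. The image $\Phi(L)\subset B^{2n}$ is then a flexible Lagrangian with Legendrian boundary $\Phi(\p L)\subset\p B^{2n}$ realizing $\eta$.

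\emph{Main obstacle.} The delicate part is Step 2: arranging simultaneously (i) smooth cancellation of each newly-attached $(k+1)$-handle against the corresponding $k$-handle of $X_0$, (ii) looseness of the attaching Legendrian in the complement of $L$ and of the other cancelling handles, and (iii) consistency of the framings with the prescribed trivialization $\eta$. Conditions (i) and (ii) require the full strength of Murphy's loose h-principle and force $n\geq 3$; condition (iii) is where the hypothesis on $\eta$ is used essentially, to guarantee the cancelling handles can be realized in the correct formal class so that Theorem \ref{thm:Weinstein-flexible} applies in Step 3.
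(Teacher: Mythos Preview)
The paper does not itself prove this theorem; it is quoted from \cite{EGL-flex}.  Your outline is essentially the argument given there and is correct: one realizes $L$ as the core of the subcritical Weinstein pair $(DT^*L,\,DT^*\partial L)$, attaches cancelling Weinstein handles---chosen loose in the complement of $L$ when they are of index $n$---so that the result is a flexible Weinstein structure on $B^{2n}$, and then invokes Theorem~\ref{thm:Weinstein-flexible}(i) to identify it with the standard ball.  The technical points you isolate in your ``Main obstacle'' paragraph (simultaneous smooth cancellation, looseness away from $L$, and compatibility of framings with $\eta$) are exactly the ones requiring care, and they are resolved in \cite{EGL-flex} along the lines you indicate; in particular the hint provided by Theorem~\ref{cor-3-manifold2} in this paper (embedding a given Weinstein subdomain into a target with flexible complementary cobordism) confirms that your Step~2 is the crux of the construction.
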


\section{Symplectic topology of Weinstein manifolds}
While flexible Weinstein structures enjoy a   full parametric $h$-principle,  there is plenty of symplectic rigidity and fine symplectic invariants of non-flexible ones. I will not discuss in this survey any such invariants and just mention that until recently most  examples  of formally homotopic but not symplectomorphic Weinstein manifolds were distinguished by their (possibly appropriately  deformed)  symplectic cohomology.
For instance, there are infinitely many non-symplectomorphic Weinstein structures on $\R^{2n}$
for any $n>2$  
(\cite{SS-example, mclean}) and by taking connected sums of  these examples with flexible Weinstein manifolds one gets infinitely many non-symplectomorphic Weinstein structures on any given ``almost Weinstein" (i.e. an almost complex manifold of homotopy type of a half-dimensional CW-complex) manifolds, see
 \cite{abouzaidseidel}.

Note that Theorem \ref{cor-3-manifold}  can   also  be  used for constructing   exotic Weinstein structures.
  In particular, 
\begin{theorem}[\cite{EGL-flex}]\label{thm:closed-exotic}
Let $L$ be a closed    $3$-manifold.
Then there exists a unique up so symplectomorphism Weinstein structure $\fW(L)= (\om_L,Z_L,\phi_L)$ on $T^*S^3$  which contains $L$ as its
flexible Lagrangian submanifold in the homology class of the $0$-section (with $\Z/2$-coefficients in the non-orientable case).
Moreover, infinitely many of these $\fW(L)$ are pairwise non-symplectomorphic.
\end{theorem}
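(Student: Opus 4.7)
The plan is to build $\fW(L)$ by replacing the standard Lagrangian core in one half of the handle decomposition of $T^*S^3$ by a flexibly embedded copy of $L^\circ:=L\setminus\Int B^3$. Since Chern classes of $TL\otimes\C$ vanish for dimensional reasons on any $3$-manifold, this bundle is trivializable; fix a trivialization $\eta$. By Theorem \ref{cor-3-manifold} pick a flexible Lagrangian embedding $L^\circ\hookrightarrow B^6_1$ realizing $\eta$, with Legendrian boundary $\Lambda\cong S^2\subset\p B^6_1=S^5$. In a second standard Weinstein ball $B^6_2$ pick a Lagrangian disk $D^3\subset B^6_2$ whose Legendrian boundary agrees with $\Lambda$ (possible by Legendrian isotopy in $S^5$ once $\eta$ is chosen compatibly). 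Glue $B^6_1$ to $B^6_2$ along a tubular neighborhood of $\Lambda$ in their boundaries. Since any framing of $S^2\subset S^5$ is the standard one ($\pi_2(SO(3))=0$), this is smoothly the standard Weinstein $3$-handle attachment, producing a manifold diffeomorphic to $T^*S^3$; the closed Lagrangian $L=L^\circ\cup_\Lambda D^3$ then sits in the homology class of the zero section (modulo $2$ in the non-orientable case).

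\textbf{Uniqueness.} Let $\fW,\fW'$ be two Weinstein structures on $T^*S^3$, each containing a copy of $L$ as a flexible Lagrangian in the stated homology class. By the Weinstein neighborhood theorem choose standard tubular neighborhoods $\nu(L)\cong T^*L$ in both. The complements $W:=\fW\setminus\nu(L)$ and $W':=\fW'\setminus\nu(L)$ are flexible Weinstein cobordisms by the definition of ``flexible Lagrangian'', sitting on the same smooth cobordism $T^*S^3\setminus T^*L$, with symplectic forms in the same formal homotopy class of almost symplectic forms. Theorem \ref{thm:Weinstein-flexible}(1), together with the relative/boundary-matching variant behind (3), produces a Weinstein homotopy $W\sim W'$ fixing boundaries; gluing this with the identity across $\nu(L)$ yields the required symplectomorphism $\fW\cong\fW'$ carrying $L$ to $L$.

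\textbf{Distinguishing infinitely many $\fW(L)$.} For a regular flexible Lagrangian $L\subset\fW(L)$, the wrapped Floer algebra $HW^*(L,L)$ is quasi-isomorphic (after the standard Maslov/orientation twists) to the Pontryagin chain algebra $C_*(\Omega L)$: the cotangent neighborhood contributes the Abbondandolo--Schwarz/Abouzaid identification, and the flexibility of the complementary cobordism $X_L$ kills all Reeb-chord and holomorphic-disc contributions from outside. Under any symplectomorphism $\fW(L_1)\cong\fW(L_2)$, the regular Lagrangian $L_1$ maps to a closed exact Lagrangian in $\fW(L_2)$ which is forced, by a split-generation argument for the wrapped Fukaya category, to be Fukaya-equivalent to $L_2$, hence to have isomorphic wrapped endomorphism algebra. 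This gives $H_*(\Omega L_1)\cong H_*(\Omega L_2)$, so taking an infinite family of closed $3$-manifolds with pairwise non-isomorphic loop space algebras---for instance lens spaces $L(p,1)$ for distinct primes $p$---produces infinitely many non-symplectomorphic $\fW(L)$. The principal obstacle is precisely this last recognition step: extracting $H_*(\Omega L)$ from $\fW(L)$ alone, independently of the choice of Lagrangian inside it. Without invoking the nearby-Lagrangian conjecture one must either establish such a split-generation/uniqueness statement for closed exact Lagrangians in $\fW(L)$, or replace the pointwise invariant $HW^*(L,L)$ by a global symplectic invariant of the Weinstein manifold---suitably twisted symplectic cohomology, or Hochschild invariants of the wrapped Fukaya category---that still computes $H_*(\Omega L)$ purely from $\fW(L)$.
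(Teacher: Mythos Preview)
The paper does not give its own proof of this theorem; it is quoted from \cite{EGL-flex}. So your attempt must be judged on its own merits, and there are genuine gaps in all three parts.

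\textbf{Existence.} Your sentence ``pick a Lagrangian disk $D^3\subset B^6_2$ whose Legendrian boundary agrees with $\Lambda$ (possible by Legendrian isotopy in $S^5$)'' is wrong as written: the Legendrian $\Lambda=\p L^\circ$ produced by Theorem~\ref{cor-3-manifold} is in general \emph{not} Legendrian isotopic to the unknot, and a non-standard Legendrian $S^2$ need not bound any Lagrangian disk in the standard ball. What actually happens is that the second ball is the Weinstein $3$-handle attached along $\Lambda$; its core disk has boundary the \emph{unknot} $\Lambda_0\subset\p B^6_2$, which is identified with $\Lambda$ by the gluing isomorphism $\Sigma(\Lambda)\cong\Sigma(\Lambda_0)$ of Weinstein thickenings (cf.\ Section~\ref{sec:gluing-splitting}). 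With this correction your construction is the right one.

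\textbf{Uniqueness.} You assert that the two complements $W,W'$ sit ``on the same smooth cobordism $T^*S^3\setminus T^*L$''. This is exactly what needs to be proved: the two Weinstein structures give two a priori different smooth embeddings of $L$ into $T^*S^3$, and $3$-manifolds in $6$-manifolds can knot (Haefliger). One must argue separately that the complementary cobordism is smoothly and almost-symplectically determined --- for instance by showing that $H_*(X_L,\p_-X_L)$ is concentrated in degrees $\leq 2$ so that $X_L$ is subcritical, and then invoking handle-theoretic uniqueness in the subcritical range together with the fact that the almost complex class is fixed by $\p_-X_L=ST^*L$. You have not done this.

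\textbf{Distinguishing.} Your main argument does not close, and you say so yourself: extracting $HW^*(L,L)$ from $\fW(L)$ alone requires a nearby-Lagrangian--type statement for the exotic $\fW(L)$, which is exactly Problem~\ref{prob:nearby} and is open. The route that actually works is the one you relegate to a final clause: compute a \emph{global} invariant. The flexibility of the complementary cobordism $X_L$ forces $SH^*(X_L)=0$; a Viterbo transfer/gluing argument then gives $SH^*(\fW(L))\cong SH^*(T^*L)\cong H_*(\mathcal L L)$, the free loop space homology. This is an invariant of the completed Weinstein manifold independent of any choice of Lagrangian, and $H_*(\mathcal L L)$ (already $H_0(\mathcal L L)$, i.e.\ conjugacy classes in $\pi_1(L)$) separates infinitely many closed $3$-manifolds. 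Until you actually run this computation, the ``infinitely many non-symplectomorphic'' clause is unproven.
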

Note that there exists only 1 homotopy class of almost complex structures on $T^*S^3$.

While the symplectic structure of $\fW(L)$ carries a lot of information about the topology of $L$, the following problem is open:
\begin{problem}\label{prob:LagS3}
Suppose $\fW(L)$ is symplectomorphic to $\fW(L')$? Does it imply that $L$ is diffeomorphic to $L'$?
\end{problem}
The famous "nearby Lagrangian problem" asks whether there is a unique
up to Hamiltonian isotopy exact closed Lagrangian submanifold in the standard $T^*M $ for a closed  $M$. Though in this form the answer is unknown except for $M=S^2$ and $T^2$,  see \cite{Hind, DRGI}, the answer is positive up to simple homotopy equivalence, \cite{AbKra}, and hence according to Smale, Freedman and Perelman  for $M=S^n$ up to homeomorphism,    and for some dimensions, e.g. $n=3,5,6,12,$ even up to diffeomorphism, \cite{Milnor-Kervaire}.  As it was pointed out to me  by O. Lazarev, one can show using methods of \cite{CaGaHaSa} that certain  exotic $T^*S^n$ may contain several not homotopy equivalent regular  closed exact  Lagrangian submanifolds.
\begin{problem}\label{prob:nearby}
Can the uniquenes results from \cite{AbKra} be extended to a more general class of Weinstein  structures on $T^*S^n$?
\end{problem}

  The proof of  Theorem \ref{cor-3-manifold} yields also the following slightly stronger result.
\begin{thm}\label{cor-3-manifold2}
Let $(X,\om, \lambda,\phi)$ be a $6$-dimensional  Weinstein   domain  such that $\phi$ has exactly  1 critical point of index $3$ (and any number of critical points of  smaller indices). Suppose also that  the   symplectic vector bundle $(TX, d\lambda)$ is trivial. Then there exists a Weinstein structure  $(\om_X,\lambda_X,\phi_X)$ on $T^*S^3$  which admits an embedding $$(X,\om, \lambda,\phi)\to  (T^*S^3,\om_X, \lambda_X,\phi_X)$$ onto a  Weinstein subdomain with a flexible complement.
\end{thm}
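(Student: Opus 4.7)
The plan is to adapt the construction underlying Theorem \ref{cor-3-manifold} so that its output---a flexible Lagrangian $3$-disk---becomes one hemisphere of the Lagrangian $0$-section $S^3 \subset T^*S^3$, while the other hemisphere is the unique top-index core of $X$, and the subcritical portion of $X$ is absorbed into the flexible complement. First, decompose $X = X_{\rm sub} \cup H^3$, where $X_{\rm sub}$ is the Weinstein subdomain containing the critical points of index $\leq 2$ and $H^3$ is the single top-index $3$-handle, attached along a Legendrian sphere $\Lambda \subset \partial X_{\rm sub}$. The Lagrangian core of $H^3$ is a disk $\Delta \cong D^3$ with $\partial \Delta = \Lambda$, and the hypothesis that $(TX, d\lambda)$ is a trivial symplectic bundle induces the trivialization of $T\Delta \otimes \mathbb{C}$ required as formal input to Theorem \ref{cor-3-manifold}.

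Next, I would run the construction of Theorem \ref{cor-3-manifold} (in a slightly parametric form) to produce a flexible Lagrangian disk $\Delta^c \cong D^3$ inside a standard Weinstein ball $B^6$ whose Legendrian boundary $\partial \Delta^c \subset \partial B^6$ is contactomorphic to $\Lambda$ under a matching of Weinstein thickenings $\Sigma(\partial \Delta^c)$ and $\Sigma(\Lambda)$. Removing a Weinstein-pair tubular neighborhood of $\Delta^c$ from $B^6$, as in the construction of the cobordism $X_L$ from \cite{EGL-flex}, yields a Weinstein cobordism $W$ with $\partial_- W = U(\Sigma(\Lambda))$, and $W$ is flexible precisely because $\Delta^c$ is a flexible Lagrangian.

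Using the pair-gluing construction of Section \ref{sec:gluing-splitting}, attach $W$ to $X$ along $U(\Sigma(\Lambda)) \subset \partial X$ to obtain a Weinstein manifold $\widehat X = X \cup_F W$. By construction $\Delta \cup_\Lambda \Delta^c$ is an embedded Lagrangian $3$-sphere in $\widehat X$, with symplectic neighborhood modeled on a neighborhood of the zero section in $T^*S^3$. Topologically, $\widehat X$ is then shown to be diffeomorphic to $T^*S^3$ by absorbing the subcritical handles of $X$ into the flexible complement $W$ via Theorem \ref{thm:Weinstein-flexible}(3) (applicable since subcritical domains are automatically flexible and the complement is flexible, allowing subcritical handles there to be slid, cancelled, or reorganized freely). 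The flexibility of the complement of $X$ in $\widehat X$ is then immediate from the construction, and the diffeomorphism is promoted to a symplectomorphism by the flexible h-principle applied to the Weinstein structure surrounding the Lagrangian $S^3$.

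The main obstacle lies in the second step: arranging the flexible Lagrangian $\Delta^c$ produced by Theorem \ref{cor-3-manifold} to have Legendrian boundary matching a \emph{prescribed} $\Lambda$, rather than the specific boundary that emerges from a standard model. This demands a parametric enhancement of that theorem's construction, achievable because (i) attaching spheres of flexible Lagrangians are automatically loose, (ii) $\Lambda$ is likewise loose in $\partial X_{\rm sub}$ by virtue of subcritical flexibility, and (iii) Murphy's h-principle for loose Legendrians can then identify the two boundaries within their common formal isotopy class. The triviality assumption on $(TX, d\lambda)$ supplies exactly the formal data needed for this identification and for the invocation of Theorem \ref{cor-3-manifold}.
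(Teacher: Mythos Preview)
The paper offers no separate proof here; it simply asserts that the argument for Theorem~\ref{cor-3-manifold} in \cite{EGL-flex} already yields this stronger statement. Your proposal, however, has a genuine gap. The manifold $\widehat X = X \cup_F W$ you construct is not diffeomorphic to $T^*S^3$ unless $X$ happens to be contractible: a Mayer--Vietoris computation for the gluing (the overlap being homotopy equivalent to $S^2$) gives $H_i(\widehat X)\cong H_i(X)$ for $i=1,2$, and these groups need not vanish since the hypothesis permits arbitrarily many subcritical handles. The flexible piece $W=B^6\setminus N(\Delta^c)$ carries no $2$- or $3$-handles capable of killing this topology, and your proposed cure---``absorbing the subcritical handles of $X$ into the flexible complement''---cannot work: those handles lie in $X$, not in $W$, and no Weinstein homotopy changes the diffeomorphism type of $\widehat X$. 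A secondary error compounds this: you claim $\Lambda$ is loose in $\partial X_{\rm sub}$ because $X_{\rm sub}$ is subcritical, but subcritical fillability does not force Legendrians to be loose (the standard Legendrian unknot in $S^5$ is not loose); indeed, if $\Lambda$ were loose then $X$ would itself be flexible, which is neither assumed nor generically true. You also conflate the attaching sphere $\Lambda\subset\partial X_{\rm sub}$, which lies in the interior of $X$, with the belt sphere in $\partial X$.

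What is actually needed is a flexible cobordism $W$ with \emph{enough} handles. One first constructs a smooth cobordism $W$ on $\partial X$, built from handles of index $\le 3$, so that $X\cup W$ is diffeomorphic to $S^3\times D^3$: add $2$-handles to kill $\pi_1(X)$, then $3$-handles to kill $H_2$, with the counts balanced so that the single original index-$3$ handle survives to generate $H_3$ (the resulting $2$-connected $6$-manifold with one $0$-handle and one $3$-handle is then unique since $\pi_2(SO(3))=0$). The triviality of $(TX,d\lambda)$ lets one promote $W$ to a flexible Weinstein cobordism via the $h$-principle of Theorem~\ref{thm:Weinstein-flexible}. This is the mechanism behind the proof of Theorem~\ref{cor-3-manifold}, and it applies verbatim here.
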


\section{Topology of Weinstein fillings}\label{sec:filing}
Contact manifolds appeared as boundaries of Weinstein domains  are called {\em Weinstein fillable}.
The fact that a Weinstein filling has a homotopy type of a half-dimensional CW-complex imposes constraints on the topology of its contact boundary and the stable almost complex class which can be realized by   Weinstein fillable contact structures on a given smooth manifold.
This question was studied in detail by Bowden-Crowley-Stipsicz in \cite{BCS1, BCS2}.  In particular, they showed that there are classes of homotopy spheres which do not admit any Weinstein fillable contact structure.

Given a contact manifold $(Y,\xi)$ one can try to describe (symplectic) topology of its Weinstein fillings. In this section we discuss this problem for contact manifolds of dimension $2n-1>3$, see \cite{Ozbagci} for a survey of results for $3$-dimensional manifolds.

First of all notice that the fact that   $X$ retracts to its $n$-dimensional skeleton implies that   the inclusion $Y=\p X\hookrightarrow X$ is $(n-1)$-connected, and in particular, if $Y$ is a homotopy sphere then $X$ is $(n-1)$-connected. It turns out that some  contact structures know  much more about the topology of their fillings.
\begin{theorem}[\cite{McD-FE}]\label{thm:EFMcD}
Any Weinstein filling of the standard contact sphere $(S^{2n-1},\xi_{\rm std})$ is diffeomorphic  to 
the ball $B^{2n}$.
\end{theorem}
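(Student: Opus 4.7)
The plan is to apply the Gromov--McDuff technique of pseudoholomorphic spheres. Let $X$ be a Weinstein filling of $(S^{2n-1},\xi_\std)$, so in particular $\om$ is exact on $X$. The standard contact sphere also bounds the ball $B^{2n}\subset\C\P^n$, so we may cap off $X$ by gluing the complement $\C\P^n\sm \Int B^{2n}$ along $S^{2n-1}$. After a small radial adjustment near the gluing locus to restore symplectic compatibility, we obtain a closed symplectic manifold $(\tilde W,\tilde\om)$ which contains $X$ as an open subdomain and contains a symplectic copy of the hyperplane at infinity $H\cong \C\P^{n-1}$ inside the cap. Because $\C\P^n\sm H=\C^n$ carries an exact Fubini--Study form and $\om|_X$ is exact by hypothesis, $\tilde\om$ is exact on the whole complement $\tilde W\sm H$.

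Choose an $\tilde\om$-tame almost complex structure $J$ on $\tilde W$ that agrees with the standard integrable structure on the cap, so that $H$ is $J$-holomorphic. Fix $p\in H$ and consider the moduli space $\MM_p$ of $J$-holomorphic spheres in the class $[L]$ of a projective line, marked to pass through $p$. In the standard $\C\P^n$ this is the $\C\P^{n-1}$ of lines through $p$. To show that $\MM_p$ is compact, combine Gromov compactness with the following bubble analysis: any non-constant $J$-sphere contained in $\tilde W\sm H$ would carry positive $\tilde\om$-energy, violating exactness there, so every component of a stable limit meets $H$; positivity of intersection of $J$-holomorphic curves combined with the topological intersection $[L]\cdot[H]=1$ then forces the limit to consist of a single non-constant component, still in class $[L]$, crossing $H$ transversely at one point. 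No bubbling occurs.

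A small perturbation of $J$ supported in $X\sm H$, or automatic transversality for embedded low-index spheres, makes $\MM_p$ a smooth compact manifold of real dimension $2n-2$. The universal evaluation $\mathrm{ev}\colon \MM_p\times_{\Aut(S^2,0)} S^2 \to \tilde W$ restricts on the cap to the standard foliation by lines through $p$, hence has degree one; positivity of intersection forces distinct members of $\MM_p$ to meet only at $p$, and transversality makes $\mathrm{ev}$ a smooth bijection. Thus $\tilde W$ is diffeomorphic to the blow-down at $p$ of the $S^2$-bundle $\MM_p\times_{\Aut(S^2,0)} S^2\cong \C\P^{n-1}$, that is, to $\C\P^n$; since the identification is the identity on the cap by construction, removing the cap yields the required diffeomorphism $X\cong B^{2n}$.

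The hard part is the combined bubbling-and-transversality analysis: one must rule out every possible degeneration of a sequence of lines, and the decisive ingredient is the exactness of $\om$ on $X$ pitted against the topological constraint $[L]\cdot[H]=1$. Once that is secured, the identification $\tilde W\cong \C\P^n$ follows the standard template for rigidifying symplectic manifolds via a uniruling family of curves; the residual technical subtlety is producing a genuinely smooth, rather than merely continuous, evaluation diffeomorphism, which is handled by a small generic perturbation of $J$ supported inside $X\sm H$ so as not to disturb the matching with the standard cap.
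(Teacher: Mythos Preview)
The paper does not supply a proof of this statement; it is simply quoted from McDuff's paper \cite{McD-FE} (the result is often called the Eliashberg--Floer--McDuff theorem) and then used as input to the discussion of Theorem~\ref{thm:BKZ}. So there is no in-paper argument to compare your sketch against.

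That said, your sketch has a real gap for $n>2$. The clause ``positivity of intersection forces distinct members of $\MM_p$ to meet only at $p$'' invokes a purely four-dimensional mechanism: positivity of intersection for a \emph{pair of $J$-holomorphic curves} holds when the curves have complementary dimension in the ambient manifold, i.e.\ when $2+2=2n$. For $n>2$ two $J$-spheres in $\tilde W^{2n}$ have expected intersection dimension $4-2n<0$ and there is no positivity statement available to control how they meet, so you cannot conclude from this that the lines through $p$ foliate $\tilde W\setminus\{p\}$ or that $\mathrm{ev}$ is injective. You also assert (presumably this is what the last display intends) that $\MM_p\cong\C\P^{n-1}$ without justification; in four dimensions this comes for free once you have the foliation, but not in general.

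What does survive in all dimensions is exactly the part you flag as ``hard'': compactness of $\MM_p$ (no bubbling, via exactness of $\om$ on $X$ together with positivity of intersection with the \emph{divisor} $H$, which \emph{is} of complementary dimension to a sphere) and the fact that $\mathrm{ev}$ has degree one. From there the higher-dimensional argument does not produce a foliation; one instead combines the degree-one evaluation with a cobordism of moduli spaces obtained by deforming $J$ toward the standard complex structure, and then uses the half-dimensional handle structure of a Weinstein filling plus the $h$-cobordism theorem to pin down the diffeomorphism type of $X$. Your outline is essentially correct for $n=2$ and assembles the right inputs in general, but the passage from ``compact moduli, degree-one evaluation'' to ``diffeomorphism with $\C\P^n$'' is precisely where the four-dimensional template breaks and additional topological argument is required.
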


Generalizing  Theorem \ref{thm:EFMcD}
K. Barth, H. Geiges and K. Zehmisch proved in \cite{BGZ16}:
\begin{theorem}\label{thm:BKZ}
All  Weinstein fillings of a simply connected  contact manifold admitting  a subcritical filling are diffeomorphic.
\end{theorem}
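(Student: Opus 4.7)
The plan is to exploit the subcritical filling $W_0$ to produce a family of pseudoholomorphic planes which, when transferred to any other Weinstein filling, pins down its diffeomorphism type. This is a direct generalization of the Eliashberg--Floer--McDuff argument behind Theorem~\ref{thm:EFMcD}, where in their case the family of planes comes from the obvious foliation of $\C^n$ by complex lines. First I would invoke Cieliebak's splitting theorem: any subcritical Weinstein manifold is Weinstein deformation equivalent to a product $V\times\C$ for some Weinstein manifold $V$ of one lower complex dimension. Applied to $W_0$, this furnishes, inside the completion $\hat W_0$, a foliation by holomorphic planes $\{v\}\times\C$, each asymptotic at infinity to a simply-covered Reeb orbit $\gamma$ in $(Y,\xi)$ coming from the ideal boundary of the $\C$-factor. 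Simple-connectedness of $Y$ forces $\gamma$ to be contractible in $Y$. The moduli space $\mathcal{M}_0$ of such planes, modulo reparameterization, evaluates onto $\hat W_0$ and presents it as the total space of a trivial $\C$-bundle over $V$.

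Next, pass to an arbitrary Weinstein filling $W_1$ of $(Y,\xi)$. Choose a $d\lambda_1$-tame almost complex structure $J_1$ on $\hat W_1$, cylindrical on the end, and consider the moduli space $\mathcal{M}_1$ of finite-energy $J_1$-holomorphic planes in $\hat W_1$ asymptotic to $\gamma$. To obtain non-emptiness and a cobordism between $\mathcal{M}_0$ and $\mathcal{M}_1$, I would stretch the neck along $Y$ in a one-parameter family of almost complex structures interpolating between ones adapted to $W_0$ and $W_1$ and apply SFT compactness: a plane from the Cieliebak family degenerates to an SFT building whose top level must contain a single $J_1$-holomorphic plane in $\hat W_1$ asymptotic to $\gamma$, and a parametric version of the same argument identifies the two moduli spaces up to cobordism.

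The crux is to show that $\mathcal{M}_1$ is a smooth compact manifold of the expected dimension $2n-2$. Sphere bubbles are ruled out by exactness of $d\lambda_1$ on $\hat W_1$. Multi-level SFT breakings in the symplectization end are excluded by the fact that $\gamma$ is the shortest Reeb orbit in its free homotopy class, together with the action bound coming from the asymptotic; simple-connectedness of $Y$ enters essentially here, since it forces every potential broken orbit to represent the trivial free homotopy class, so that an action/index count prevents splitting. Transversality for somewhere-injective planes is standard for generic $J_1$.

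Finally, the evaluation map $\mathrm{ev}\colon \mathcal{M}_1\times\C \to \hat W_1$ is proper, and positivity of intersections with a codimension-two symplectic submanifold transverse to the foliation shows it is a diffeomorphism onto its image. An exhaustion argument yields $\hat W_1 \cong \mathcal{M}_1\times\C$, and truncating along a Liouville sublevel gives $W_1 \cong \mathcal{M}_1\times D^2$. Combining with the analogous description of $W_0$ and the cobordism $\mathcal{M}_0\sim\mathcal{M}_1$ yields $W_1\cong W_0$, whence any two Weinstein fillings of $(Y,\xi)$ are diffeomorphic. The principal obstacle throughout is the compactness and transversality analysis of $\mathcal{M}_1$ with no a priori control on the Reeb dynamics of $(Y,\xi)$; this is precisely the step that forces the hypothesis $\pi_1(Y)=0$.
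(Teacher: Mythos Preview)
The paper does not prove this theorem; it is stated as a result of Barth--Geiges--Zehmisch \cite{BGZ16} and no argument is given. So there is no ``paper's own proof'' to compare against.

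That said, your outline is a plausible SFT-flavored strategy, but it differs from the actual argument in \cite{BGZ16} and contains a genuine gap. The BGZ proof does not study open holomorphic planes in the two completions separately via neck-stretching. Rather, it partially compactifies the subcritical filling $W_0\simeq V\times\C$ to $V\times\C P^1$, removes $W_0$ to obtain a symplectic cap, and glues this cap to an arbitrary filling $W_1$ along $Y$. In the resulting symplectic manifold one studies the moduli space of $J$-holomorphic spheres in the class of $\{\mathrm{pt}\}\times\C P^1$; compactness is controlled by minimality of the area/class, not by Reeb dynamics, and the evaluation map exhibits the glued manifold as an $S^2$-bundle, from which $W_1\cong W_0$ is read off directly.

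The gap in your approach is the final step. From $\hat W_0\cong\mathcal{M}_0\times\C$, $\hat W_1\cong\mathcal{M}_1\times\C$, and a \emph{cobordism} $\mathcal{M}_0\sim\mathcal{M}_1$ you cannot deduce $W_0\cong W_1$: cobordant manifolds need not be diffeomorphic. You would need an actual diffeomorphism $\mathcal{M}_0\cong\mathcal{M}_1$, and the parametric neck-stretching you describe only produces a cobordism. The cap construction in \cite{BGZ16} avoids this entirely by working with a single moduli space of closed spheres in a single ambient manifold. A secondary issue is your compactness argument: asserting that ``$\gamma$ is the shortest Reeb orbit in its free homotopy class'' presupposes control of the Reeb flow on $(Y,\xi)$ that is not given by the hypotheses; working with closed curves in the capped manifold is precisely what lets \cite{BGZ16} bypass any analysis of Reeb dynamics.
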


In fact,  both Theorems  \ref{thm:EFMcD} and \ref{thm:BKZ}  hold in a stronger form for a more general class of symplectic, and not necessarily Weinstein fillings.
We also note that  while it follows from Theorem \ref{thm:Weinstein-flexible} that all  completed subcritical  Weinstein fillings of a given contact manifold  are symplectomorphic (we note that the $(n-1)$-connectedness of the inclusion map $\p X\hookrightarrow X$ implies that  the homotopy class of   an almost complex  structure on a subcritical manifold is determined by the homotopy class of its restriction to the boundary), it is unknown for $n>2$  whether {\em all} completed  fillings of a contact manifold admitting a subcritical filling (e.g. the standard contact sphere) are symplectomorphic.

The following theorem of Oleg Lazarev constrains topology of {\em flexible} Weinstein manifolds.
\begin{theorem}[\cite{Laz-new}]\label{thm:Lazarev1}
All flexible fillings of of a contact manifold $(Y,\xi)$ with $c_1(Y,\xi)=0$ have canonically isomorphic integral homology.
\end{theorem}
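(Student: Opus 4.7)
The strategy is to extract the integral (co)homology of $X$ from an invariant of the contact boundary $(Y,\xi)$ alone, using the vanishing of symplectic cohomology for flexible fillings. The condition $c_1(Y,\xi)=0$ enters to guarantee a well-defined $\mathbb{Z}$-grading on all Floer-theoretic groups, so that the comparison happens degree-by-degree rather than only mod some shift.

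First, I would invoke the Viterbo long exact sequence relating the (integer-coefficient) symplectic cohomology and singular cohomology of a Liouville filling:
\begin{equation*}
\cdots\to H^{*}(X;\Z)\longrightarrow SH^{*}(X;\Z)\longrightarrow SH^{*}_+(X;\Z)\longrightarrow H^{*+1}(X;\Z)\to\cdots
\end{equation*}
Here $SH^*_+$ is the positive part of symplectic cohomology, whose chain complex is generated by Reeb orbits of a contact form on $Y$ (together with generators coming from the boundary contribution). The second step is to use the theorem of Bourgeois--Ekholm--Eliashberg (re-proved and sharpened by Murphy--Siegel) that a flexible Weinstein domain has vanishing symplectic cohomology, $SH^*(X;\Z)=0$. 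Combined with the exact sequence this yields a canonical degree-shift isomorphism
\[
H^{*+1}(X;\Z)\;\cong\;SH^{*}_+(X;\Z).
\]

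The main step, and the real obstacle, is to prove that $SH^{*}_+(X;\Z)$, computed for a flexible filling $X$ of $(Y,\xi)$, depends only on $(Y,\xi)$ and not on the choice of flexible filling. The natural target is to identify $SH^*_+$ with a contact-topological invariant of $Y$ such as linearized contact cohomology, whose dependence on the filling enters only through the augmentation of the contact dga induced by the filling. For flexible $X$ the attaching Legendrians of the top-index handles are loose, and one can arrange a preferred augmentation (built from the loose discs) that sees no holomorphic curves; this should force the bar-complex computing $SH^*_+$ to collapse to a form determined purely by the Reeb dynamics and the classical topology of $Y$. Concretely, I would (i) choose a compatible contact form on $Y$ and a cylindrical almost complex structure extended over $X$ in a flexibility-respecting way, (ii) use a neck-stretching argument along $Y$ to break every Floer cylinder contributing to the $SH^*_+$-differential into pieces consisting of SFT curves in the symplectization of $Y$ capped off by curves in $X$, and (iii) show that the looseness of the attaching Legendrians kills all the cap contributions, leaving a differential that is intrinsic to $(Y,\xi)$.

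Finally, once step three gives a canonical isomorphism $SH^*_+(X;\Z)\cong \mathcal{I}(Y,\xi)$ with a contact invariant $\mathcal{I}$ of $(Y,\xi)$ independent of the flexible filling, two flexible fillings $X_1,X_2$ of the same $(Y,\xi)$ produce a canonical isomorphism
\[
H^{*+1}(X_1;\Z)\;\cong\;SH^{*}_+(X_1;\Z)\;\cong\;\mathcal{I}(Y,\xi)\;\cong\;SH^{*}_+(X_2;\Z)\;\cong\;H^{*+1}(X_2;\Z),
\]
which by universal coefficients and Poincar\'e--Lefschetz duality (using that $X_i$ retracts onto its half-dimensional skeleton and has $\partial X_i=Y$) promotes to a canonical isomorphism of integral homology groups. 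The hard part is genuinely the middle step: the filling-independence of $SH^*_+$ for flexible fillings, since over $\Z$ one must also control torsion arising from moduli of holomorphic curves, which requires coherent orientations and, where curves are rigid in virtual dimension zero, a careful argument that the flexibility hypothesis prevents any cap contribution regardless of the coefficient ring.
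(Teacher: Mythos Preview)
The paper does not prove this theorem; it is quoted as a result of Lazarev \cite{Laz-new} without argument. So there is no ``paper's own proof'' to compare against. That said, your outline is the correct one and is essentially Lazarev's strategy: use $SH^*(X;\Z)=0$ for flexible $X$ to turn the Viterbo long exact sequence into an isomorphism $H^{*+1}(X;\Z)\cong SH^*_+(X;\Z)$, and then show that $SH^*_+$ is independent of the choice of flexible filling.

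Where your plan is weakest is in the mechanism you propose for the filling-independence of $SH^*_+$. The description ``looseness of the attaching Legendrians kills all the cap contributions'' after neck-stretching is too vague to be an argument, and is not how Lazarev actually proceeds. His proof is more structural: he shows that $SH^*_+$ is unchanged under attaching a \emph{flexible} Weinstein cobordism (this uses the Bourgeois--Ekholm--Eliashberg surgery description together with the vanishing of the Legendrian contact homology of loose Legendrians), and then uses the $h$-principle for flexible Weinstein structures to interpolate between any two flexible fillings by flexible cobordisms. In particular, one does not try to identify $SH^*_+$ directly with an SFT-type invariant of $(Y,\xi)$ computed purely in the symplectization; rather, one shows invariance under the specific class of modifications that relate flexible fillings. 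Your step~(iii) as written would require a full SFT compactness and gluing package over $\Z$ and a genuine argument that ``no caps'' survive, which is both harder and not what is actually done. The rest of your plan---the role of $c_1=0$ for the $\Z$-grading, the use of the long exact sequence, and the final passage from cohomology to homology---is fine.
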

In particular, as  Lazarev observed,  Theorem \ref{thm:Lazarev1} together   Smale's classification of $2$-connected $6$-manifolds from \cite{Smale}  and the fact that $\pi_3(O/U)=0$ yield  a complete classification of flexibly fillable contact structures on $S^5$.
\begin{corollary}[\cite{Laz-new}]
There exists a  sequence $\xi_n, n=0,1,\dots,$ of pairwise non-contactomorphic contact structures on $S^5$ such that
\begin{itemize}
\item any flexibly fillable contact structure on $S^5$ is contactomorphic to one of the structures from this sequence;
\item the contact structure $\xi_0$ is standard;
\item   for $n\geq 1$
the contact sphere  $(S^5,\xi_n)$ admits a unique up to symplectomorphism  flexible Weinstein filling 
  diffeomorphic to 
  $(\mathop{\#}\limits_1^n S^3\times S^3)\setminus B^6$.
  \end{itemize}
\end{corollary}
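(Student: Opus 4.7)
The plan is to combine Smale's classification of closed $2$-connected $6$-manifolds, Murphy's $h$-principle for loose Legendrians \cite{M12}, the flexibility $h$-principle Theorem \ref{thm:Weinstein-flexible}(i), and Lazarev's homological rigidity Theorem \ref{thm:Lazarev1}.

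First I would describe the smooth topology of any Weinstein filling $X$ of $(S^5,\xi)$. Because $X$ has the homotopy type of a $3$-dimensional CW complex, the inclusion $\p X\hookrightarrow X$ is $2$-connected; together with $\p X=S^5$ being $2$-connected this forces $X$ itself to be $2$-connected. Capping $X$ by a $6$-ball yields a closed $2$-connected $6$-manifold $\widehat X$, which by Smale \cite{Smale} is diffeomorphic to $\#^{k}(S^3\times S^3)$ for a unique $k\ge 0$. Consequently every Weinstein filling of $S^5$ is diffeomorphic to $X_k:=(\#^{k} S^3\times S^3)\setminus B^6$, and since $H^2(S^5)=0$ we have $c_1(S^5,\xi)=0$ automatically, so Theorem \ref{thm:Lazarev1} applies to every flexible filling.

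Second I would construct the structures $\xi_k$. Set $\xi_0:=\xi_{\rm std}$, filled by the standard $B^6$. For $k\ge 1$, the smooth manifold $X_k$ admits a handle decomposition consisting of one $0$-handle and $2k$ handles of index $3$; I would realize it as a Weinstein handlebody by attaching each $3$-handle to $(S^5,\xi_{\rm std})$ along a loose Legendrian $2$-sphere. By Murphy's $h$-principle every formal Legendrian attaching datum can be made loose without changing its smooth class, so the required smooth data can be achieved. The resulting Weinstein domain $W_k$ is flexible by construction and diffeomorphic to $X_k$; let $\xi_k$ denote its boundary contact structure.

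The core of the argument is uniqueness of the flexible filling of $(S^5,\xi_k)$. For any flexible filling $X'$, Theorem \ref{thm:Lazarev1} gives $H_*(X')\cong H_*(X_k)$, and $2$-connectedness together with Smale upgrades this to a diffeomorphism $X'\cong X_k$. To promote the diffeomorphism to a symplectomorphism I would invoke Theorem \ref{thm:Weinstein-flexible}(i); this requires the two flexible Weinstein structures to induce homotopic almost complex structures on $TX_k$, and this is where the hypothesis $\pi_3(O/U)=0$ is essential. Indeed $X_k$ is parallelizable (the usual obstructions to trivializing $TX_k$ on its $3$-skeleton live in $H^2(X_k;\pi_1(SO))$ and $H^3(X_k;\pi_2(SO))$, both zero because $X_k$ is $2$-connected and $\pi_2(SO)=0$), and $H^2(X_k)=0$, so $c_1$ vanishes automatically; the remaining obstruction classifying stable almost complex structures on a $3$-dimensional CW complex lies in $H^3(X_k;\pi_3(O/U))=0$. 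Hence $X_k$ admits a unique homotopy class of almost complex structure, and its flexible Weinstein structure is unique up to symplectomorphism. The $\xi_k$ are then pairwise non-contactomorphic because $\rank H_3(X_k)=2k$ distinguishes their unique flexible fillings, and conversely any flexibly fillable $(S^5,\xi)$ has a flexible filling $\cong X_k$ for some $k$, which by uniqueness is symplectomorphic to $W_k$, forcing $\xi\cong\xi_k$. The hardest step is this almost-complex-structure uniqueness, which is the one place where $\pi_3(O/U)=0$ is really used; the handle construction in step two is routine once Murphy's $h$-principle is invoked.
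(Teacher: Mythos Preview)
Your argument is correct and is precisely the expansion of the one-sentence sketch the paper gives before the Corollary: combine Lazarev's Theorem \ref{thm:Lazarev1}, Smale's classification of $2$-connected $6$-manifolds, and $\pi_3(O/U)=0$ to pin down both the diffeomorphism type and the almost complex homotopy class of any flexible filling, then apply Theorem \ref{thm:Weinstein-flexible}(i). The paper offers no further detail beyond naming these three ingredients, so your write-up is a faithful and complete elaboration of the intended proof.
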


There are further constraints on the topology of flexible Weinstein fillings.   
In particular, 
\begin{thm}[\cite{EGL-signature}] Let $(S^{4n-1},\xi)$ be a flexibly fillable contact  structure. Then the signature of its flexible  filling is uniquely determined by the contact structure $\xi$.
\end{thm}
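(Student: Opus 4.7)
Let $X_0, X_1$ be two flexible Weinstein fillings of $(Y,\xi) = (S^{4n-1},\xi)$. The plan is to form the closed oriented $4n$-manifold $W := X_0 \cup_Y \overline{X_1}$, where $\overline{X_1}$ denotes $X_1$ with reversed orientation. By Novikov additivity of the signature, $\sigma(W) = \sigma(X_0) - \sigma(X_1)$, so it suffices to prove that $\sigma(W) = 0$.

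The first step is to equip $W$ with a natural stable almost complex structure. Each $X_i$ is almost complex from its symplectic form, and the two almost complex structures restrict on $Y$ to the canonical stable almost complex structure associated to $\xi$ (namely $J|_\xi$ together with the complex line spanned by the Reeb and outward normal directions); after the orientation reversal on $X_1$ they glue up to homotopy to a stable almost complex structure on $W$. By the Hirzebruch signature theorem, $\sigma(W) = \langle L_n(p_1(TW),\dots,p_n(TW)),[W]\rangle$, where the Pontryagin classes are those of the stable tangent bundle and are expressible as polynomials in the Chern classes of this stable almost complex structure. Via the Mayer--Vietoris sequence for $W = X_0 \cup_Y \overline{X_1}$, and using that $H^k(S^{4n-1}) = 0$ for $0<k<4n-1$, the characteristic classes of $W$ in each relevant degree are determined by their restrictions to $X_0$ and $\overline{X_1}$, and the evaluation of any top-dimensional characteristic monomial on $[W]$ decomposes as the difference of its relative integrals over $X_0$ and $X_1$.

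The remaining task is to match these integrals, which is the crux and where I would invoke a refinement of Lazarev's Theorem \ref{thm:Lazarev1}. Lazarev builds a canonical isomorphism $H^*(X_0) \cong H^*(X_1)$ via a chain of flexible handle-modifications. I would try to upgrade this to an identification of the stable tangent bundles, or at least of their Chern classes, because each flexible handle step is governed by its formal homotopy-theoretic data (loose Legendrian attaching spheres satisfy a parametric $h$-principle) and therefore ought to preserve the stable tangential class, not merely the homology. Once the Chern classes of $X_0$ and $X_1$ are seen to correspond under this isomorphism, their integrals in the Mayer--Vietoris decomposition agree, so $\sigma(W) = 0$ and hence $\sigma(X_0) = \sigma(X_1)$. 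The principal obstacle is precisely this tangential lift, i.e.\ tracking the stable almost complex structure through Lazarev's chain of flexible surgeries; if this proves subtle, a fallback is to argue in the classifying-space model of the flexible filling directly, where the handle-attaching data together with its formal framing determine both the underlying smooth manifold and its Pontryagin classes, from which the signature is extracted intrinsically from $(Y,\xi)$.
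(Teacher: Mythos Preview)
The paper does not actually prove this theorem; it is stated with a citation to \cite{EGL-signature}, which the bibliography lists as ``in preparation.'' So there is no proof in the present paper against which to compare your proposal.

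Assessing your argument on its own terms: the Novikov reduction to $\sigma(W)=0$ is correct, and invoking the Hirzebruch signature theorem is a natural strategy. But there is a genuine gap exactly where you yourself flag the ``principal obstacle.'' You need Lazarev's canonical isomorphism $H^*(X_0)\cong H^*(X_1)$ to carry the Pontryagin (or Chern) classes of one filling to those of the other, and you offer only the heuristic that flexible surgeries ``ought to'' preserve the stable tangential data because loose Legendrians satisfy an $h$-principle. That is precisely the substantive claim requiring proof. Theorem~\ref{thm:Lazarev1}, as stated, matches homology groups, not tangent bundles or ring structure, and nothing in your sketch pins down the relative characteristic numbers: since $Y=S^{4n-1}$, two almost complex fillings extending the same stable almost complex structure on $Y$ can a priori differ by any element of $\pi_{4n}(BU)\cong\Z$, and it is exactly this integer that governs the discrepancy in the top relative Chern number and hence potentially the signature. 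Your fallback paragraph (``argue in the classifying-space model'') is too schematic to close this gap. As written, the proposal is an outline with the decisive step left open.

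A smaller technical point: gluing the almost complex structures of $X_0$ and $\overline{X_1}$ into a stable almost complex structure on $W$ is not as automatic as you suggest, since reversing orientation on $X_1$ is incompatible with its given $J_1$. You should either work directly with Pontryagin classes (which are orientation-blind and suffice for the $L$-genus) or be explicit about passing to the conjugate structure and checking that the boundary identifications still match.
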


\begin{problem}
Does a contact structure $(Y,\xi)$ remember
\item{a)} the  diffeomorphism type of its flexible Weinstein filling $(X,\om,Z,\phi)$?
\item{b)} the  almost symplectic homotopy class $[\om]$ of the symplectic structure $\om$?
\end{problem}

We note that the diffeomorphism type of $X$ together with the homotopy class $[\om]$ determine a flexible Weinstein structure up to Weinstein homotopy, and hence  the positive answer to a) and b)
would imply that the contact structure $(Y,\xi)$ remember the symplectomorphism type of the completion of its flexible  filling.

\section{Nadler's  program of arborealization}
A priori, a  skeleton of a Weinstein domain can have very complicated singularities.
However, David Nadler conjectured that up to Weinstein homotopy the singularities of the skeleton can be reduced to a finite list in any dimension, see \cite{Nadler1}.
For $2n$-dimensional symplectic Weinstein manifolds the list of Nadler's singularities,   which he calls {\em arboreal},  are enumerated by decorated rooted trees with $\leq n+1$ vertices. It is remarkable that the singularity of each  given type has a unique symplectic realization.
Nadler also proposed in \cite{Nadler2} a procedure for arborealization of the skeleton of a Weinstein structure.
His procedure replaces a given Weinstein structure by another one whih an arboreal skeleton.
Nadler proved in \cite{Nadler2} that the constructed Weinstein  manifold has microlocal sheaf-theoretic invariants  equivalent to those of the Weinstein manifold. Conjecturally this  implies that the wrapped Fukaya categories are  also the same  for   the original and modified Weinstein manifold. However, it is unclear whether Nadler's  modification yields   a  Weinstein structure  which  is homotopic, or even symplectomorphic to the original one.  

In an    ongoing joint project \cite{ENS} with David Nadler and  Laura Starkston   we are exploring a somewhat different strategy  for arborealization of the Weinstein skeleton via a Weinstein homotopy  using simplification of singularities type technique in the spirit of a recent paper of D. \'{A}lvarez-Gavela, \cite{Dani}.  In some special cases this program was already carried out by   Starkston in \cite{Starkston}.

In this section we discuss the arboreal singularities with more detail and give   precise statements  of some of the  results from \cite{ENS}.

 \subsection{Definition of an arboreal singularity}

While we  define below   arboreal models as closed properly embedded subsets of the standard symplectic  vector space,   we are interested only in germs of these models at the origin.

Consider a tree $T$ with $\leq n+1$ vertices and a fixed vertex $R$, the {\em root}.  Suppose in addition that all edges, except the terminal ones  are decorated with $\pm1$. We will denote by $\eps$ the decoration, and   by $|T|$ the total number of   vertices.   With each decorated  rooted    tree $(T,\eps)$  we associate a unique up to symplectomorphism model $A(T,\eps,m) \subset \R^{2m}=T^*\R^m$ in each dimension  $m\geq n$ of the skeleton.  The models will be stratified by   strata which are isotropic for the Liouville form $pdq$.
 In dimension $m>n$  we have $A(T,\eps,m)=A(T,\eps,n)\times\R^{m-n}\subset T^*\R^n\times T^*\R^{m-n}= T^*\R^m$.

\begin{figure}[h]
\begin{center}
\includegraphics[scale=.7]{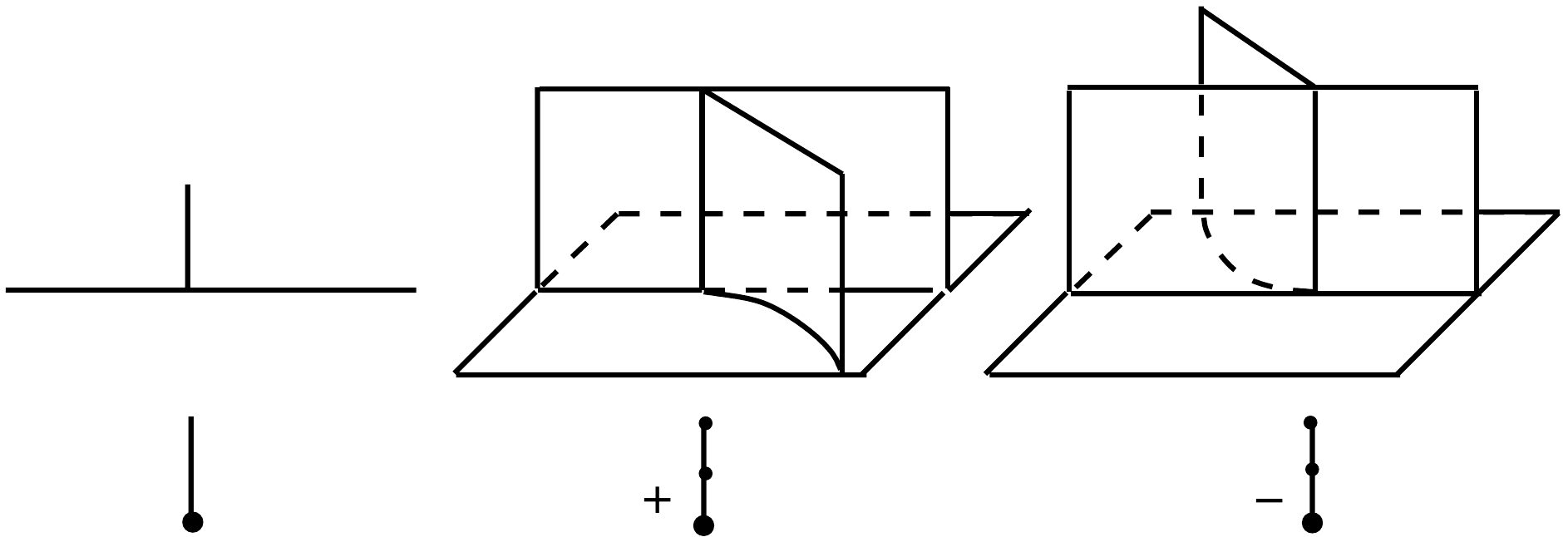}
\smallskip

\includegraphics[scale=.7]{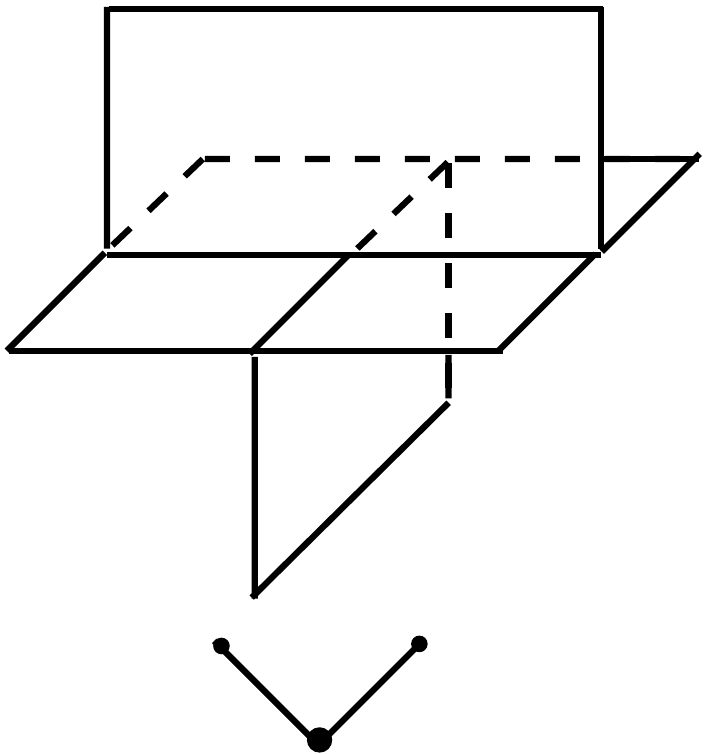}
\caption{Arboreal singularities labeled by rooted decorated trees. The picture represents Lagrangian skeleta themselves, and not their front projections. Free boundaries of vertical strata form Legendrian trees, while their traces at the horizontal plane are fronts of these trees.}\label{fig:arbor}
\end{center}  
\end{figure}

The model $A(T,\eps,n)$ will be defined inductively in  $n$.
For a tree $T$ which consists of one vertex we define $A(T,0)$ to be a point (in the $0$-dimensional symplectic space  $T^*\R^0$), and respectively
$A(T,n)=\R^{n}\subset T^*\R^n$.

As it was already stated above,   the Liouville form $pdq$ vanishes on each stratum of the    model $A(T,\eps,n)\subset T^*\R^n$.  Hence, if we view  $T^*\R^{2n}$ as a (Weinstein) hypersurface $\{z=0\}$ in the contact space $(\R^{2n+1}=T^*\R^n\times\R, pdq+dz)$, then  all strata of $A(T,\eps,n)\subset T^*\R^n$ are also isotropic  for the contact form $pdq+dz$. However, unless $A(T,\eps,n)$ is a Lagrangian plane, the front projection $(p,q,z)\mapsto (q,z)$ is very degenerate, because it collapses the image to the hyperplane $\{z=0\}$. We want to deform the model $A(T,\eps,n)$ in $\R^{2n+1}$ to make the front projection more generic.
To do that, consider a contactomorphism 
$S:\R^{2n+1}\to \R^{2n+1}$  given by the formula
$$S(p_1,\dots,p_n, q_1,q_2,\dots, q_n, z)=(p_1,\dots,p_n, q_1+p_1,q_2,\dots, q_n, z-\frac{p_1^2}2).$$ Then $$S^{-1}(p_1,\dots,p_n, q_1,q_2,\dots, q_n, z)=(p_1,\dots,p_n, q_1-p_1,q_2,\dots, q_n, z+\frac{p_1^2}2).$$ 
 Denote 
$$ \wh A^+(T,\eps,n):=S(A(T,\eps,n)),\;\; \wh A^-(T,\eps,n):=S^{-1}(A(T,\eps,n)) .$$
The sets $ \wh A^\pm(T,\eps,n)$ are stratified  by isotropic for the contact form $dz+pdq$ strata. If $|T|=1$ we have $ \wh A^+(T,n)= A(T,n)$.

Suppose that we already defined models  for all decorated rooted trees $(T,\eps)$ with $|T|\leq n $.
 Consider a rooted tree $(T,\eps)$ with $|T|=n+1$.
 By removing the root $R$  and all edges adjacent  to $R$ we get $k$ decorated trees 
 $(T_1,\eps), \dots, (T_k,\eps_k)$  with $|T_1|=n_1,\dots, |T_k|=n_k , \; n_1+\dots+n_k=n$.  For each of them we choose as its root the  vertex which was connected  in $T$ to $R$. Let $\sigma_j=\pm1$ be the decoration  of the edge  which was connecting   the root $R$ with  the root of the tree $T_j$, $j=1,\dots, k$.

 Consider   already defined models $ A(T_1,\eps_1,n-1),\dots, A(T_k,\eps_k,n-1)\subset T^*\R^{n-1}\times\R$.

Denote  $N_0:=0$, $N_j:=\sum\limits_{i=1}^j n_i ,\;\; j=1,\dots, k-1$.
For each $j=0,\dots, k-1$ 
 consider  the hyperplane $\Pi_j=\{p_{N_j+1}=1\}$  in   $\R^{2n}=T^*\R^{n }$ with the Liouville form $\lambda=\sum\limits_1^n p_j dq_j$. Note that $\Pi_j$ is transverse to the Liouville vector field $Z=\sum\limits_1^np_j\frac{\p}{\p p_j},$ or equivalently  $\lambda|_{\Pi_j}=dq_{N_j+1}+\sum\limits_{i\in\{1,\dots, n\}, i\neq N_j+1} p_idq_i$
 is a contact form.   Cyclically  ordering  coordinates $q_{N_j+2},\dots, q_n, q_1,\dots, q_{N_j}$  and taking the coordinate $q_{N_j+1}$ as $z$ we  identify  $ \Pi_j$ with $T^*\R^{n-1}\times \R$. Consider  $ A^{\sign(\sigma_j)}(T_j,\eps_j, n-1)\subset \Pi_j$.

  Denote $$B(T,\eps, n):=\{(tp,q)\in T^*\R^n;\; t\in[0,\infty), (p,q)\in \bigcup\limits_{j=1}^k  \wh A^{\sign(\sigma_j)}(T_j,\eps_j, n-1))\}.$$  Note that  $B(T,\eps, n) \cap\{p=0\}$ is the union of front projections of Legendrian complexes $ \wh A^{\sign(\sigma_j)}(T_j,\eps_j, n-1))$, and  $ B(T,\eps, n)$ is the positive conormal of this stratified set  co-oriented by the vector field $\frac{\p}{\p q_{N_j+1}}$.
Finally, we define  $$A(T,\eps, n):=\{p=0\}\cup B(T,\eps, n).$$ 
Singularities of the form  $A(T,\eps, n)$ where $(T,\eps)$ is a decorated rooted tree are called
{\em primary arboreal}.

 Note that up to   linear symplectomorphism the result of the above construction is independent of the ordering  of the trees $T_1,\dots, T_k$. Indeed, the corresponding symplectomorphism is the symplectization  of the linear automorphism of $\R^n$ appropriately permuting the coordinates $q_1,\dots, q_n$.
 
As an example, let us explicitly construct the models shown on Fig, \ref{fig:arbor}.
For a tree with 2 vertices we take the standard symplectic $\R^2$ with coordinates
$(p,q)$. Then $\Pi =\{p=1\}$.  For the $1$-vertex  tree $T_1$ the model $A(T_1,0)$ coincides is the point $ \{p=1,q=0\}\in\Pi$ and $\wh A(T_1,0)= A(T_1,0)$. Hence   $B(T,1)=\{(t,0), t\geq 0\}$ is the positive $p$-semi-axis, and 
 $A(T,1)=\{p=0\}\cup B(T,2),$ is the union of the coordinate line $q$ with this semi-axis, as it is shown on the left side of Fig.  \ref{fig:arbor}.
 
 For the rooted tree with three vertices and the central root, as on the lower picture in Fig. \ref{fig:arbor}, each of   the trees  $T_1, T_2$ has 1 vertex.
 Hence,  $\Pi_1=\{p_1=1\}, \Pi_2= \{p_2=1\}$, and identifying this hyperplanes with the standard contact $\R^3$ we get
 $A(T_1,1)=\{p_2=q_1=0\}\subset\Pi_1$ and $A(T_2,1)=\{p_1=q_2=0\}\subset\Pi_2$. Therefore,
 $$A(T,2)=\{p=0\}\cup\{p_2=q_1=0,p_1\geq0\}\cup\{p_1=q_2=0,p_2\geq 0\}.$$

  Finally, consider  the  right models on  Fig.  \ref{fig:arbor}.
 The models are contained in the standard symplectic $\R^4$ with canonical coordinates $(p_1,q_1,p_2,q_2)$, and we have  $\Pi=\Pi_1=\{p_1=1\}$  The tree $T_1$ in this case consists of two vertices, and  identifying   $\Pi$ with the standard symplectic $\R^2$, we find that $$ \wh A^\pm(T_1,1) =\{q_1=p_2=0\}\cup\{q_1= \mp p_2^2, p_2=\pm q_2, p_2\geq 0\}.$$ Note that the second stratum in the union can also be written as $\{q_1= \mp q_2^2, p_2=\pm q_2, p_2\geq 0\}$
  Thus we have 
\begin{align*}
&B(T, +1, 2)=\{p_2=0, q_1=0, \;p_1\geq 0\}\cup \{q_1= - 
\frac {q_2^2}2, p_2= p_1q_2, \;p_1,p_2\geq 0\},\\
&B(T, -1, 2)=\{p_2=0, q_1=0, \;p_1\geq 0\}\cup \{q_1=  
\frac {q_2^2}2, p_2= -p_1q_2, \;p_1,p_2\geq 0\}
\end{align*}
Note that $B(T,\pm1,2)\cap\{p=0\}=\{q_2=0\}\cup\{q_1=\mp\frac{q_2^2}2\}$ is the front of the Legendrian tree $\wh A^\pm(T_1,1)$, while $B(T,\pm1,2)$ is the positive conormal of this front co-oriented by the vector field $\frac{\p}{\p q_1}$.  \medskip

A {\em general  } arboreal singularity is associated to a 
{\em double decorated }
rooted  tree  with  an additional decoration $\beta$ which assigns $0$ or $1$ to  all  terminal vertices of the tree $T$. We extend $\beta$ to all vertices  by setting $\beta(v)=0$ for all non-terminal vertices.  Primary arboreal  singularities correspond to the case when the decoration $\beta$ is identically $0$.

 We denote  $|\beta|:=\sum\beta(v)$, where the sum is taken over all terminal vertices $v$ of the tree $T$.
  With each double decorated tree $(T,\eps, \beta)$ we associate a unique up to symplectomorphism model
$A(T,\eps,\beta, m)\subset T^*\R^m$ for each $m\geq |T|+|\beta|-1$.
    In dimension $m \geq n:= |T|+|\beta|-1 $  we have $$A(T,\eps,\beta, m)=A(T,\eps,\beta, n)\times\R^{m-n}\subset T^*\R^n\times T^*\R^{m-n}= T^*\R^m.$$
    
    The model  $A(T,\eps,\beta, m)\subset T^*\R^m$  with $m= |T|+ |\beta| -1$ is defined by a similar inductive   procedure  as for primary arboreal singularities, beginning  with  $$A(T,\eps,\beta,1)=\{p=0,q\geq 0\}\subset T^*\R \;\;\hbox{
   for}\;\;  |T|=1\;\; \hbox{and } \; |\beta|=1.$$
  
  Every model  $A(T,\eps,\beta, m)\subset T^*\R^m$ can be presented as a union of Lagrangian sheets
  $L_v$ enumerated by vertices of the graph $T$. Denote by $d(v)$ the distance between $v$ and the root. Then  $L_v$ is diffeomorphic to
  the quadrant $$\{(x_1,\dots, x_n)\in\R^n; x_1,\dots x_k\geq0\},\quad  k=d(v)+\beta(v).$$
  
   Note that  the model  $A(T,\eps,\beta, m)$ inherits a smooth structure (i.e. the algebra of smooth functions) from the ambient space $\R^{2n}$. 
   By an $n$-dimensional {\em arboreal complex} we mean a set covered by charts diffeomorphic to one of the models $A(T,\eps,\beta, n)$. Hence, every arboreal complex can be canonically stratified by  strata $S_{T,\eps,\beta}$ of dimension $n-|T|-|\beta|+1$. A diffeomorphism  $f:C\to C'$ between two arboreal complexes induces a diffeomorphism between the corresponding strata, but not every continuous map $f:C\to C'$ which is a diffeomorphism on the corresponding strata is a diffeomorphism of arboreal complexes $C$ and $C'$.

\subsection{Main results}  
\begin{prop}[\cite{ENS}]
For each  arboreal complex $C$ there exists a unique up to symplectomorphism   Weinstein domain $ \fW(C)=(X,\om, Z,\phi)$,   {\em ``the cotangent bundle"} of $C$ such that $C=\Core(X,\om, Z)$. Any two  such Weinstein structures  $(X,\om, Z,\phi)$ and $(X,\om, Z',\phi')$ are homotopic through  a family of Weinstein structures with a fixed core.
\end{prop}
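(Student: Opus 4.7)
The plan is to build $\fW(C)$ by gluing the local cotangent models associated to the strata of $C$ and then to propagate uniqueness from the local rigidity of arboreal singularities to a global statement using the contractibility of auxiliary choices.

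First, I would set up the local picture. By the definition of an arboreal complex, every point $x\in C$ has a neighborhood in $C$ modeled by some $A(T,\eps,\beta,n)\subset T^*\R^n$, with the standard Liouville form $\lambda_{\st}=pdq$ vanishing on every stratum. This gives a canonical local Weinstein germ whose skeleton is precisely the model: take the Liouville vector field $Z_{\st}=\sum p_i\p_{p_i}$, which retracts $T^*\R^n$ onto the zero section; the preimage of the arboreal complex under the backward flow is the whole $T^*\R^n$, but by restricting to a suitable neighborhood bounded by a $Z_{\st}$-convex hypersurface one obtains a Weinstein germ $\fW_{\loc}(T,\eps,\beta)$ whose core is exactly $A(T,\eps,\beta,n)$. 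A Lyapunov function can be taken of the form $\phi_{\st}=\tfrac12\sum p_i^2+\psi$, where $\psi$ is a Morse function on $C$ cut out of the stratification; (L1) and (L2) hold by construction, and a small perturbation gives (L3).

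Second, I would glue. The rigidity statement quoted in the excerpt (``each arboreal singularity has a unique symplectic realization'') means that any diffeomorphism of arboreal complexes $f\colon U\to U'$ between two charts in $C$ is covered by a symplectomorphism $\wt f$ of ambient cotangent germs, and the space of such lifts is contractible (the lifts differ by a Hamiltonian isotopy supported away from the skeleton and preserving it). Choosing a good cover of $C$ by arboreal charts $U_\alpha$, the lifts $\wt f_{\alpha\beta}$ of the transition maps $f_{\alpha\beta}$ satisfy the cocycle condition up to contractible choices, so by a standard partition-of-unity/Moser argument one obtains a germ $(V,\omega)$ of symplectic manifold around $C$ together with a Liouville vector field $Z$ whose stable manifold of $\Crit$ is $C$. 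The Weinstein domain $X$ is then obtained by flowing outward with $Z$ and cutting off by a regular level of a global Lyapunov function $\phi$ built from the local $\phi_{\st,\alpha}$ via convexity of Weinstein data noted in the introduction.

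Third, for uniqueness I would argue in two steps. Given two Weinstein domains $(X_i,\omega_i,Z_i,\phi_i)$ with core $C$, the local rigidity produces a germ-level symplectomorphism $\Psi\colon\Op_{X_1}C\to\Op_{X_2}C$ with $\Psi|_C=\Id$. Since both domains retract to $C$ under the backward Liouville flow, $\Psi$ extends to a global symplectomorphism $X_1\to X_2$ by composing with the appropriate time map of $Z_2$, up to a radial reparameterization. For the homotopy statement, I would use condition (L3) and the convexity of Liouville and Lyapunov data (as in Sections 11.1--11.2 of \cite{CE12}): the space of Liouville forms $\lambda$ on a neighborhood of $C$ with Liouville vector field having attractor exactly $C$ is convex modulo shrinking, and the space of compatible Lyapunov functions is convex too. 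Straight-line interpolation between the two Weinstein structures therefore yields a path of Weinstein structures with $\Core$ fixed equal to $C$.

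The main obstacle is the gluing step in the middle paragraph: verifying that the rigidity of a single arboreal model really is parametric enough to glue along intersections of charts, i.e.\ that the space of symplectic lifts of arboreal automorphisms is contractible with the right functoriality. This is where the inductive structure of $A(T,\eps,\beta,n)$ must be exploited carefully, proceeding by induction on $|T|+|\beta|$ and using the fact, visible in the inductive construction via the hyperplanes $\Pi_j$, that the symplectomorphism group of $(T^*\R^n,A(T,\eps,\beta,n))$ deformation-retracts onto a product of the corresponding groups for the sub-trees $T_j$, each contractible by the inductive hypothesis and by the uniqueness of Legendrian/contact extensions $\wh A^\pm$. Granting this, the global construction and its uniqueness follow by the standard Weinstein homotopy machinery.
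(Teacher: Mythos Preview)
First, a framing remark: the paper does not contain a proof of this proposition. It is quoted as a result of \cite{ENS}, a paper listed as ``in preparation'', and no argument is supplied here. So there is no proof in the present text against which to compare your proposal.

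That said, your local construction has a genuine error that would have to be fixed before the rest of the argument can proceed. You take the standard Liouville field $Z_{\st}=\sum_i p_i\,\p_{p_i}$ on $T^*\R^n$ and assert that on a suitable $Z_{\st}$-convex domain its core is the arboreal model $A(T,\eps,\beta,n)$. This is false: $Z_{\st}$ vanishes precisely on $\{p=0\}$, and its backward flow $(p,q)\mapsto(e^{-t}p,q)$ contracts every point onto the zero section. Hence for any domain with $Z_{\st}$ outward transverse to the boundary the core is contained in $\{p=0\}$. The conormal cone $B(T,\eps,n)$ is $Z_{\st}$-invariant, but under the negative flow it collapses to the origin and is \emph{not} part of the attractor. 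Your proposed Lyapunov function $\tfrac12|p|^2+\psi$ confirms this: its critical locus, hence the zero set of any Liouville field it tames, sits inside $\{p=0\}$.

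The fix is not a perturbation; one must change the Liouville form near the non-root sheets so that the Liouville field acquires zeros along them (or becomes tangent to the boundary there). In the language of this paper the local model at a singular chart is most naturally a Weinstein \emph{pair}: the sheets of $A(T,\eps,\beta,n)$ other than $\{p=0\}$ exit a small neighborhood through an arboreal Legendrian complex in its contact boundary, and the adjustment of Proposition~\ref{prop:pair-adjust} (subtracting $d(su)$-type terms) is exactly what makes those sheets part of the core. The global $\fW(C)$ is then assembled by the gluing of Weinstein pairs of Section~\ref{sec:gluing-splitting}, sheet by sheet along the lower-dimensional strata, rather than by patching ordinary cotangent-ball germs with the unmodified $pdq$. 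Your uniqueness paragraph is reasonable in outline, but it presupposes the correct local existence step and so inherits the gap; the ``contractibility of lifts'' you invoke must likewise be formulated for the adjusted pair structures, not for the raw $Z_{\st}$.
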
  
  \begin{thm}[\cite{ENS}] \label{thm:arboreal-existence}
  \begin{enumerate}
  \item Any Weinstein structure is homotopic to a Weinstein structure with an arboreal skeleton.  
  \item Let $\fW_t$, $t\in[0,1]$ be a Weinstein homotopy such $\fW_0$  and $\fW_1$ have arboreal skeleta. Then there exists a Weinstein pair structure $(\fW;\fW_0\cup\fW_1)$ on $X\times T^*I$ with an arboreal skeleton which is homotopic to the Weinstein pair associated to the homotopy $\fW_t$ {\rm (see Section \ref{sec:hom-pair})}.
  \end{enumerate}
    \end{thm}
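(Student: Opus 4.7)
The plan is to prove part (1) by induction on the Morse index of the critical points of the Lyapunov function, arborealizing the skeleton stratum by stratum, and then to obtain part (2) by applying the same procedure relatively to the concordance Weinstein pair associated to the homotopy $\fW_t$.

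First I would reduce to the case where $\phi$ is Morse using condition (L3). In this setting $\Core(\fW)$ is the union of stable manifolds of the critical points of $\phi$, whose singularities arise from two sources: (a) the front projection of the Legendrian attaching spheres of top-index handles into the ideal contact boundary of the subcritical skeleton, and (b) the intersection patterns between stable manifolds of distinct critical points. The induction proceeds over the index. At index $0$ the skeleton is a discrete set, trivially arboreal. Assuming the skeleton of the Weinstein subdomain containing all critical points of index $<k$ has already been arborealized, for each index-$k$ critical point I would isotope the Legendrian attaching sphere of the corresponding handle so that its front along the subcritical skeleton, together with the core Lagrangian disc, forms a larger arboreal complex. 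This is where the simplification-of-singularities technique in the spirit of \'{A}lvarez-Gavela \cite{Dani} enters: the wrinkling $h$-principle together with his controlled simplification of generating-family caustics allow the Legendrian front to be brought into a normal form involving only the caustic types that match the arboreal models $A(T,\eps,\beta)$, with the decorations $\eps$ and $\beta$ recording the co-orientations of incident Lagrangian sheets and the presence of free boundary respectively.

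The hard part will be ensuring compatibility of these local arborealizations when assembled into a global Weinstein homotopy. The essential technical input is a Weinstein neighborhood-type theorem (implicit in the ``cotangent bundle'' proposition stated just before the theorem) asserting that the germ of a Weinstein structure is rigidly determined by its arboreal skeleton, together with its parametric version allowing interpolation between local modifications, and a verification that the Lyapunov condition (L1) and the stratification properties (L2)--(L3) persist through the deformation. The fact that each arboreal singularity has a unique symplectic realization is precisely what makes these extensions possible: once the skeleton is arborealized locally, the ambient Weinstein structure is reassembled without ambiguity, and the resulting modifications patch together into a global Weinstein homotopy.

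For part (2), I would first apply the construction of Section \ref{sec:hom-pair} to the homotopy $\fW_t$ to produce a Weinstein pair structure on $X\times T^*I$ whose Weinstein hypersurface is $\fW_0\cup\fW_1$, already carrying arboreal skeleta by hypothesis. Part (1) is then applied relative to a neighborhood of this hypersurface, keeping the already-arboreal boundary skeleta fixed. The parametric, relative form of the simplification-of-caustics $h$-principle supplies the required families of Weinstein pair structures interpolating between the original concordance and one with an arboreal skeleton, yielding the desired homotopy of Weinstein pair structures.
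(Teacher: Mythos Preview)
The paper does not contain a proof of this theorem: it is stated as a result from \cite{ENS}, a paper ``in preparation'', and no argument is given beyond the one-sentence description earlier in Section~8 that the strategy proceeds ``via a Weinstein homotopy using simplification of singularities type technique in the spirit of a recent paper of D.~\'{A}lvarez-Gavela, \cite{Dani}''. So there is nothing in the present paper to compare your proposal against at the level of actual proof steps.

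That said, your outline is broadly consistent with the hint the paper does give: you invoke the \'{A}lvarez-Gavela style simplification of front singularities, you use the uniqueness of the Weinstein ``cotangent bundle'' of an arboreal complex (the proposition immediately preceding the theorem) to reassemble local modifications, and for part~(2) you use the concordance pair of Section~\ref{sec:hom-pair} and work relatively. These are the right ingredients as far as the survey indicates. What your sketch does not address, and what the paper gives no information about either, is the genuinely nontrivial content: why the front singularities one can achieve by \cite{Dani}-type simplification are exactly the arboreal list (rather than some other finite list), how the inductive step interacts with already-arborealized lower strata without destroying their arboreal structure, and how the decorations $(\eps,\beta)$ arise concretely from the simplification procedure. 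These are presumably the substance of \cite{ENS}, and your proposal, like the paper, treats them as black boxes.
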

  
  Under some topological constraints on the manifold $X$ one can further restrict the list of necessary singularities.
  \begin{thm}[\cite{ENS}]\label{thm:arboreal-complexification}
  Let $\fW=(X,\om, Z,\phi)$ be a Weinstein structure. Suppose that
  \begin{description}
  \item{a)} the manifold $X$ is $(n-2)$-connected;
  \item{b)} there exists a field of Lagrangian planes $\tau\subset TX$; in other words, $TX$ with its homotopically canonical almost complex structure is isomorphic to the complexification of a real  $n$-dimensional  vector bundle.
  \end{description}
  Then the Weinstein structure $\fW$ is homotopic to a Weinstein structure
  $\wt\fW=(X,\om, \wt Z,\wt\phi)$ whose skeleton is an arboreal complex with singularities of type
  $(T,\eps,\beta)$ where the distance from the root of the tree $T$ to any other  vertex is no more than $2$ and the decoration $\eps$ takes only positive values.
  \end{thm}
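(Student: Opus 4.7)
The plan is to bootstrap from Theorem \ref{thm:arboreal-existence}(1), which already provides a homotopic Weinstein structure with an arboreal skeleton, and then use the two topological hypotheses to successively cut down the list of tree-types $(T,\eps,\beta)$ that appear. The hypothesis (a) will force the depth of every tree to be $\leq 2$, and the hypothesis (b) will force every non-terminal decoration $\eps$ to be $+1$.

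For the depth reduction, recall that a Weinstein domain is homotopy equivalent to its skeleton, so the $(n-2)$-connectedness of $X$ together with standard Smale-style handle cancellation in the Weinstein category (see \cite{CE12}, Chapters 9--12) lets one homotope the Lyapunov function $\phi$ to one with critical points only of indices $n-1$ and $n$. The strata of the skeleton are the stable manifolds of critical points, whose dimensions equal the indices; so only strata of dimensions $n$ and $n-1$ survive, and incidences among them can descend at most twice in codimension. In the model $A(T,\eps,\beta,n)$ the dimension of the open stratum associated to a vertex $v$ is $n-d(v)-\beta(v)$, so bounding codimensions of strata by $2$ forces the distance from the root to any vertex to be at most $2$. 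One has to be careful that handle cancellation is compatible with the arborealization of Theorem \ref{thm:arboreal-existence}, but this compatibility is exactly what part (2) of that theorem guarantees: one can carry out the cancellation through a Weinstein homotopy and then re-arborealize the endpoints.

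For the positivity of $\eps$, I would use the refined arborealization procedure of \cite{ENS}, together with the simplification-of-singularities machinery of \'{A}lvarez-Gavela \cite{Dani}. The decoration $\eps$ on a non-terminal edge is determined, via the inductive definition of $A(T,\eps,n)$ in terms of $\wh A^\pm$ and the contactomorphism $S$, by a choice of co-orientation of a conormal branch relative to the Lagrangian wall from which it emanates. A global field of Lagrangian planes $\tau \subset TX$ furnishes a coherent transverse co-orientation of every wall of the skeleton simultaneously. Using Álvarez-Gavela's parametric wrinkle/cusp elimination to perform decoration-flipping moves locally around each offending edge, and propagating the moves globally by the $\tau$-coherent co-orientation, one homotopes the structure so that every non-terminal edge carries decoration $+1$. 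The fact that such a global co-orientation can be lifted to the arboreal skeleton is precisely where hypothesis (b) enters, and mirrors the usual relationship between totally real embeddings and Lagrangian plane fields.

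The main obstacle, and the delicate step, is compatibility: performing the decoration-flipping moves must not reintroduce incidences of higher depth or new critical points of index $<n-1$. I would address this by carrying out the two reductions in the right order, using Theorem \ref{thm:arboreal-existence}(2) at each stage as a relative $h$-principle to propagate control through the Weinstein homotopy: first reduce the depth via handle cancellation plus a depth-only arborealization move, and only then perform positivity-moves that locally modify a chart of the arboreal complex $A(T,\eps,\beta,n)$ without altering the underlying tree $T$ or the decoration $\beta$. Verifying that such decoration-only moves exist as local Weinstein-pair homotopies in the neighborhood of each arboreal chart is the real technical heart, and rests on the explicit local normal-form calculations with the contactomorphism $S$ done in \cite{ENS}.
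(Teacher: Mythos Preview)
The paper does not contain a proof of this theorem. Theorem~\ref{thm:arboreal-complexification} is stated in Section~8.2 as an announcement of a result from \cite{ENS}, which is listed as ``in preparation''; no argument, sketch, or indication of method is given in the present survey. Consequently there is nothing in the paper against which to compare your proposal.

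That said, your outline has at least one conceptual gap worth flagging. In the depth-reduction step you pass from ``$X$ is $(n-2)$-connected'' to ``critical points only of indices $n-1$ and $n$'' (you have omitted the obligatory index-$0$ minimum, but that is minor), and then assert that this bounds the depth of every tree by $2$ via the formula ``the dimension of the open stratum associated to a vertex $v$ is $n-d(v)-\beta(v)$''. This conflates two different stratifications. Before arborealization the skeleton is stratified by stable manifolds of critical points, and their dimensions equal the Morse indices; after arborealization the skeleton is an arboreal complex whose strata $S_{T,\eps,\beta}$ have dimension $n-|T|-|\beta|+1$, while each Lagrangian sheet $L_v$ is $n$-dimensional with $d(v)+\beta(v)$ corners. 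The depth $d(v)$ governs the corner structure of the sheet, not the dimension of a stratum, so a bound on stratum codimensions does not translate directly into a bound on tree depth in the way you claim. One expects instead that the hypotheses (a) and (b) enter the arborealization procedure of \cite{ENS} itself, controlling which local models are produced, rather than acting as post-hoc filters on an already-arborealized skeleton.

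Your treatment of the sign decoration is more honest: you correctly identify that the existence of a global Lagrangian plane field should supply coherent co-orientations, and you explicitly defer the technical verification of the local ``decoration-flip'' moves to \cite{ENS}. Since that is precisely where the paper also defers, there is no discrepancy to report on this point.
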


\end{document}